\documentclass[12pt, reqno]{amsart}
\usepackage{fullpage}

\usepackage{amsfonts,amssymb,latexsym,amsmath, amsxtra}
\usepackage{verbatim}

\newcommand{\field}[1]{\mathbb{#1}}

\newcommand{\N}{\field{N}}

\newcommand{\ov}{\overline}

\numberwithin{equation}{section}
\newtheorem{theorem}{Theorem}[section]
\newtheorem{lemma}[theorem]{Lemma}

\newtheorem{proposition}[theorem]{Proposition}
\theoremstyle{remark}

\renewenvironment{proof}[1][Proof]{\begin{trivlist}
\item[\hskip \labelsep {\bfseries #1:}]}{\qed\end{trivlist}}

\title{A generalisation of a partition theorem of Andrews to overpartitions}
\author{Jehanne Dousse}
\address{LIAFA \\
Universite Paris Diderot - Paris 7 \\
75205 Paris Cedex 13 \\
FRANCE }
\email{jehanne.dousse@liafa.univ-paris-diderot.fr}
\date{\today}

\begin{document}

\begin{abstract}
In 1969, Andrews~\cite{Generalisation1} proved a theorem on partitions with difference conditions which generalises Schur's celebrated partition identity. In this paper, we generalise Andrews' theorem to overpartitions. Our proof uses $q$-differential equations and recurrences.
\end{abstract}

\maketitle

%\tableofcontents
%\newpage
%
%

\section{Introduction}
A partition of $n$ is a non-increasing sequence of natural numbers whose sum is $n$.
An overpartition of $n$ is a partition of $n$ in which the first occurrence of a number may be overlined.
For example, there are $14$ overpartitions of $4$:
$4$, $\overline{4}$, $3+1$, $\overline{3}+1$, $3+\overline{1}$, $\overline{3}+\overline{1}$, $2+2$, $\overline{2}+2$, $2+1+1$, $\overline{2}+1+1$, $2+\overline{1}+1$, $\overline{2}+\overline{1}+1$, $1+1+1+1$ and $\overline{1}+1+1+1$.

In 1926, Schur~\cite{Schur} proved the following partition identity.

\begin{theorem}[Schur]
\label{schur}
Let $D_1(n)$ denote the number of partitions of $n$ into distinct parts congruent to $1$ or $2$ modulo $3$.
Let $E_1(n)$ denote the number of partitions of $n$ of the form $n= \lambda_1 + \cdots + \lambda_s$ where $\lambda_i - \lambda_{i+1} \geq 3$ with strict inequality if $\lambda_{i+1} \equiv 0 \mod 3$.
Then $A(k,n)=B(k,n)$.
\end{theorem}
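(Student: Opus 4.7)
My plan is to prove the identity $D_1(n) = E_1(n)$ (reading the ``$A(k,n)=B(k,n)$'' in the displayed statement as a typographical slip, since no $A$, $B$, or $k$ has been introduced) via the method of $q$-difference equations, in the spirit of the technique the paper develops for its overpartition generalisation. The generating function for the sequence $D_1$ is immediate from the definition,
\[ \sum_{n \ge 0} D_1(n)\, q^n \;=\; \prod_{k \ge 1}(1+q^{3k-2})(1+q^{3k-1}), \]
so the whole content of Schur's identity is the claim that $\sum_n E_1(n)\, q^n$ equals the same infinite product.

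All the real work is on the $E_1$ side. I would introduce a bivariate generating function
\[ E(x,q) \;=\; \sum_{n,s} E_1(n,s)\, x^s q^n, \]
refined by an auxiliary statistic $s$ chosen so that the Schur bijection with the $D_1$ side respects it; note that the naive choice ``$s=$ number of parts'' does not work, since Schur's bijection fails to preserve that statistic, so a natural alternative is to give each part divisible by $3$ weight $x^2$, reflecting the asymmetry in the gap condition. The key step is to classify the partitions counted by $E_1$ according to the residue of the smallest part $\lambda_s$ modulo $3$, decomposing $E = 1 + F_0 + F_1 + F_2$. Peeling off $\lambda_s$ and subtracting $3$ from every remaining part when $\lambda_s \not\equiv 0 \pmod 3$, or $4$ when $\lambda_s \equiv 0 \pmod 3$, translates the gap constraint into the substitutions $x \mapsto xq^3$ and $x \mapsto xq^4$ applied to the $F_r$, producing three coupled $q$-difference equations. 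Eliminating the three auxiliaries should collapse the system into a single $q$-difference equation for $E(x,q)$.

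With such a functional equation in hand, one checks that the product on the right (suitably refined with the matching $x$-statistic) satisfies the same equation and agrees at $x=0$; induction on the coefficient of $x^s$ then yields equality as formal power series, and specialisation at $x=1$ completes the proof. The main obstacle I anticipate is the bookkeeping in the derivation of the $q$-difference equation itself: finding a refining statistic under which the mixed gap condition produces a \emph{closeable} system is the delicate point, and it is precisely this kind of careful engineering that the paper will have to adapt to handle the additional flexibility introduced by overlined parts. Every step downstream of the functional equation is essentially mechanical.
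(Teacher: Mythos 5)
You are right that the conclusion should read $D_1(n)=E_1(n)$; the displayed ``$A(k,n)=B(k,n)$'' is a slip. Note also that the paper does not prove Theorem~\ref{schur} on its own: it is Schur's classical result, cited with references, and it is recovered as the case $N=3$, $a(1)=1$, $a(2)=2$, $d=0$ of the paper's main theorem. Your plan belongs to the same family as that proof (recurrences on the smallest part leading to a $q$-difference equation), but as written it has concrete gaps.

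First, the peeling step is set up incorrectly. When the smallest part $\lambda_s$ is divisible by $3$, the correct amount to subtract from each remaining part is $6=N\,w(3)$, not $4$: subtracting $4$ changes every residue class modulo $3$ and destroys the classification you are inducting on, whereas subtracting $6$ preserves residues and leaves a partition of the same kind with smallest part at least $v(3)=1$. Second, your two statistics are incompatible. The substitution $x\mapsto xq^{c}$ encodes ``subtract $c$ from each of the remaining parts'' only when the exponent of $x$ is the \emph{number of parts}; with your weighted statistic (weight $x^{2}$ for parts divisible by $3$) the $q$-shift $q^{c(m-1)}$ and the $x$-exponent no longer match, so the system you describe does not close. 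Third, and most importantly, the step you dismiss as ``essentially mechanical'' is the heart of the matter. With the number-of-parts statistic the functional equation one obtains is
\begin{equation*}
f(x)=\left(1+xq+xq^{2}\right)f\left(xq^{3}\right)+xq^{3}\left(1-xq^{3}\right)f\left(xq^{6}\right),
\end{equation*}
and the product $(-xq;q^{3})_{\infty}(-xq^{2};q^{3})_{\infty}$ does \emph{not} satisfy it: already at $n=3$ the refined identity fails, since the single part $3$ on the $E_1$ side corresponds to the two parts $1+2$ on the $D_1$ side. So one cannot simply ``check that the product satisfies the same equation and agrees at $x=0$'' and induct on coefficients; one must extract the value at $x=1$ from a functional equation whose solution is not the product for general $x$. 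That extraction --- via transformations of the coefficient recurrences and Appell's comparison theorem for the limit $x\to 1^{-}$ --- is exactly what Section~3 of the paper is devoted to, and it is the genuinely non-mechanical part of every proof in this family. Until you either carry out that limit argument or verify that your alternative weighted refinement both closes the system and is satisfied by the refined product, the proof is not complete.
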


Several proofs of Schur's theorem have been given using a variety of different techniques such as bijective mappings~\cite{Bessenrodt,Bressoud}, the method of weighted words~\cite{Alladi}, and recurrences~\cite{Andrews2,Andrews1,Andrews3}.

Schur's theorem was subsequently generalised to overpartitions by Lovejoy~\cite{Lovejoy}, using the method of weighted words. The case $k=0$ corresponds to Schur's theorem.

\begin{theorem}[Lovejoy]
\label{schur_over}
Let $D_1(k,n)$ denote the number of overpartitions of $n$ into parts congruent to $1$ or $2$ modulo $3$ with $k$ non-overlined parts.
Let $E_1(k,n)$ denote the number of overpartitions of $n$ with $k$ non-overlined parts, where parts differ by at least $3$ if the smaller is overlined or both parts are divisible by $3$, and parts differ by at least $6$ if the smaller is overlined and both parts are divisible by $3$.
Then $D_1(k,n)=E_1(k,n)$.
\end{theorem}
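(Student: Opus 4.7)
The plan is to compute the full generating function
$$E(d,q) := \sum_{k,n \geq 0} E_1(k,n)\, d^k q^n$$
and to verify that it equals
$$D(d,q) := \sum_{k,n\geq 0} D_1(k,n)\, d^k q^n = \prod_{n \geq 0}\frac{(1+q^{3n+1})(1+q^{3n+2})}{(1-dq^{3n+1})(1-dq^{3n+2})},$$
the product form for $D(d,q)$ being immediate from the definition of $D_1(k,n)$ (each non-overlined residue generates a geometric factor, each overlined one contributes a single factor $1+q^{3n+r}$).

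To attack $E(d,q)$, I would introduce a refinement by the largest part. Let $f(x,d,q)$ denote the generating function in which $x$ tracks the largest part of an overpartition counted by $E_1$, while $d$ records the number of non-overlined parts and $q$ records the total weight. The idea is to build a recurrence by conditioning on the largest part, splitting into six sub-cases indexed by its residue modulo $3$ and by whether or not it is overlined. This produces auxiliary generating functions $f_{r,\epsilon}(x,d,q)$ with $r \in \{0,1,2\}$ and $\epsilon \in \{0,1\}$. Peeling off the largest part then expresses each $f_{r,\epsilon}$ as a $q$-shift of a combination of the others: the difference conditions prescribe how small the next-largest part may be, and the dichotomy ``$\geq 3$ versus $\geq 6$'' in Theorem~\ref{schur_over} is encoded in the shift amount, which depends on whether the new largest part is itself overlined and divisible by~$3$.

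Eliminating the six auxiliary functions should reduce the system to a single $q$-difference equation for $f(x,d,q)$, amenable to iteration. Solving it (either by iterating and summing the resulting series, or by an Ansatz guided by the product shape of $D$) will give a closed form for $f$. Specialising $x$ so as to remove the bound on the largest part then yields $E(d,q)$, and comparison with $D(d,q)$ completes the identification $E_1(k,n) = D_1(k,n)$.

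The main obstacle I foresee lies in the bookkeeping of the six sub-cases: the coupled condition (smaller part overlined \emph{and} both parts divisible by $3$, which forces a gap of at least $6$) creates a system of cross-referenced recurrences that must be untangled carefully into one clean $q$-difference equation. A secondary difficulty is identifying the correct closed form for the intermediate function $f(x,d,q)$; a natural candidate is a two-variable product of Pochhammer-like factors specialising to the right-hand side of $D(d,q)$, but verifying that it actually solves the $q$-difference equation will require some delicate cancellation between the shifted products.
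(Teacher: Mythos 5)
There is a genuine gap in your plan, and it lies precisely in the step you wave through: the choice of the \emph{largest} part as the statistic tracked by $x$. The difference condition in Theorem~\ref{schur_over} (and in the general Theorems~\ref{andrews} and~\ref{dousse}) is governed by the \emph{smaller} of two consecutive parts: the required gap above $\lambda_{i+1}$ depends on the residue and overline status of $\lambda_{i+1}$ itself. If you peel off the largest part $\lambda_1$, the admissible range for $\lambda_2$ depends on $\lambda_2$'s own type, so your six auxiliary functions become recurrences in the integer bound on the largest part, not a $q$-difference equation in $x$: removing $\lambda_1$ leaves an object whose largest part is constrained by $\lambda_1 - g$ with $g$ depending on the \emph{next} part, and there is no substitution $x \mapsto xq^{c}$ that realises ``same family of overpartitions, largest part shifted down by $g$.'' The device that makes the $q$-difference-equation method close up --- used in Lemma~\ref{lemma1} and equation~\eqref{eqf1} of this paper, and in the author's direct proof of this very theorem in \cite{Dousse} --- is to remove the \emph{smallest} part and then subtract a fixed quantity ($Nw(\alpha)$ or $N(w(\alpha)-1)$, i.e.\ $3$ or $6$ here, according to the overline status) from \emph{every} remaining part, restoring an overpartition of the same type with a shifted minimal part. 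That global subtraction changes $n$ by $(m-1)$ times a constant, which is why $x$ must track the \emph{number of parts} $m$: only then does the operation become the clean substitution $f_{v(\alpha)}(xq^{Nw(\alpha)})$. Your sketch has no analogue of this renormalisation, so the claim that the six coupled pieces ``reduce to a single $q$-difference equation for $f(x,d,q)$'' is unsupported and, with $x$ as you define it, false as stated.

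Beyond that structural issue, the remaining steps (``solving it by iteration or by an Ansatz,'' ``verifying the delicate cancellation'') are exactly where all the work of the theorem lives, and they are deferred rather than carried out. For comparison: the present paper does not prove Theorem~\ref{schur_over} directly --- it is Lovejoy's theorem \cite{Lovejoy} --- but obtains it as the case $N=3$, $a(1)=1$, $a(2)=2$ of Theorem~\ref{dousse}, whose proof proceeds by (i) the smallest-part recurrence of Lemma~\ref{lemma1}, (ii) telescoping to the single equation $(\mathrm{eq}_{N,r})$ via Lemma~\ref{conj}, and (iii) an induction on $r$ (Theorem~\ref{main}) showing any solution with $f(0)=1$ satisfies $f(1)=\prod_{k=1}^r (-q^{a(k)};q^N)_\infty/(dq^{a(k)};q^N)_\infty$, which for $N=3$ is your product $D(d,q)$. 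Your identification of the target product is correct; to make your argument viable you should either switch to the smallest-part/number-of-parts refinement, or commit fully to a largest-part recursion in the integer bound $m$ (in the style of Andrews' early proofs of Schur's theorem) and supply the limiting argument as $m\to\infty$, which is a different and not shorter route.
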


Theorem~\ref{schur_over} was then proved bijectively by Raghavendra and Padmavathamma~\cite{Pad}, and using $q$-difference equations and recurrences by the author~\cite{Dousse}.

Andrews used the ideas of his proof of Schur's theorem with recurrences based on the smallest part of the partition~\cite{Andrews1} to prove a much more general theorem on partitions with difference conditions~\cite{Generalisation1}, of which another special case is the following.

\begin{theorem}[Andrews]
\label{andrews7}
Let $D_2(n)$ denote the number of partitions of $n$ into distinct parts $\equiv 1,2,4 \mod 7$. Let $E_2(n)$ denote the number of partitions of $n$ of the form $n= \lambda_1 + \cdots + \lambda_s$, where 
\begin{equation*}
\lambda_i - \lambda_{i+1} \geq 
\begin{cases}
7\ \text{if}\ \lambda_{i+1} \equiv 1,2,4\ (mod\ 7),\\
12 \ \text{if}\ \lambda_{i+1} \equiv 3\ (mod\ 7),\\
10 \ \text{if}\ \lambda_{i+1} \equiv 5,6\ (mod\ 7),\\
15 \ \text{if}\ \lambda_{i+1} \equiv 0\ (mod\ 7).
\end{cases}
\end{equation*}
Then $D_2(n) = E_2(n).$
\end{theorem}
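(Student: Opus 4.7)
My plan is to mimic the recurrence-based approach that Andrews used in his original proof~\cite{Generalisation1}. On the product side,
\[
\sum_{n\geq 0} D_2(n)\, q^n \;=\; \prod_{n\geq 0}(1+q^{7n+1})(1+q^{7n+2})(1+q^{7n+4}),
\]
so the task reduces to showing that the generating function of $E_2$ equals this product. I would refine the $E_2$ side by the residue modulo $7$ of the smallest part, introducing seven auxiliary generating functions $\Phi_r(q)$ ($r\in\{0,1,\dots,6\}$), each enumerating the partitions counted by $E_2$ whose smallest part is congruent to $r\pmod 7$, so that $\sum_{n\geq 0} E_2(n) q^n = 1 + \sum_{r=0}^{6} \Phi_r(q)$.

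The first step is to derive a system of $q$-difference equations for the $\Phi_r$ by deleting the smallest part. If a partition counted by $\Phi_r$ has smallest part $\lambda_s \equiv r \pmod 7$, then the next smallest part must exceed $\lambda_s$ by at least $g_r\in\{7,10,12,15\}$ and may only occupy those residue classes that remain permissible under the gap rules. Shifting the remaining parts down by $\lambda_s$ expresses each $\Phi_r(q)$ as a linear combination of the $\Phi_s$'s evaluated at explicit $q$-shifts, yielding a closed $7\times 7$ linear $q$-difference system with prescribed initial conditions.

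The second step is to show that the infinite product $D(q)$ admits an analogous refinement satisfying the same system. Here I would insert a weight $x$ tracking the number of parts and decompose the product into pieces indexed by subsets $S \subseteq \{1,2,4\}$ of residues already ``chosen'' as the smallest part. The elementary identity
\[
\prod_{n\geq 0}(1+xq^{\alpha_n}) \;=\; (1+xq^{\alpha_0})\prod_{n\geq 1}(1+xq^{\alpha_n})
\]
supplies the recursions, which one verifies termwise match those for the $\Phi_r$. Uniqueness of the solution of the linear $q$-difference system with the common initial data then forces equality of the two generating functions, which is the theorem.

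I expect the principal obstacle to be the combinatorial bookkeeping. With seven residue classes, four different minimum-gap values and eight subsets of $\{1,2,4\}$, the naive system is large and asymmetric, and a brute-force verification would be very unpleasant. The critical structural insight (implicit in~\cite{Generalisation1}) is to organise the auxiliary functions along the subset lattice $\mathcal{P}(\{1,2,4\})$, indexed by which residues have been used so far in building the partition from its smallest part upward; this alignment collapses the $q$-difference system into a compact, uniform form that matches the product-side refinement cleanly.
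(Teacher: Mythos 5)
Your overall strategy --- recurse on the smallest part, encode the result as a linear $q$-difference system, and match it against the product --- is the right framework, and it is essentially the shape of the argument in this paper (which obtains Theorem~\ref{andrews7} as the case $k=0$, $N=7$, $a(1)=1$, $a(2)=2$, $a(3)=4$ of Theorem~\ref{dousse}). But two of your concrete steps fail as stated. First, deleting the smallest part $\lambda_s$ and ``shifting the remaining parts down by $\lambda_s$'' destroys exactly the structure you need: the gap conditions are determined by the residue of the smaller part modulo $7$, and subtracting $\lambda_s$ (which is generally not a multiple of $7$) permutes all the residues, so the image need not satisfy the difference conditions at all. The correct operation (Lemma~\ref{lemma1}) is to delete $\lambda_s$ and subtract the fixed multiple $7\,w(\beta_7(\lambda_s))$ of $7$ from each \emph{remaining} part; this preserves residues, hence the gap conditions, and the gap hypothesis guarantees the new smallest part is at least $v(\beta_7(\lambda_s))$, so one lands back on one of finitely many auxiliary functions. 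This exposes the second problem: the total amount subtracted is $(m-1)\cdot 7\,w(\beta_7(\lambda_s))$, which depends on the number $m$ of remaining parts, so single-variable series $\Phi_r(q)$ cannot close up under the recursion --- no substitution in $q$ alone encodes ``subtract a constant from each of the $m$ parts'' when $m$ is not recorded. You must carry the part-counting variable $x$ on the difference-condition side (not only on the product side, as you propose), so that the recursion reads $f_{\alpha}(x)-f_{\alpha'}(x)=xq^{\alpha}f_{v(\alpha)}\!\left(xq^{7w(\alpha)}\right)+\cdots$ as in Lemma~\ref{lemma2}; relatedly, the useful indexing is by lower bounds $\alpha\in\{1,\dots,7\}$ on the smallest part (the functions $p_\alpha$), since the deletion step sends you to $p_{v(\alpha)}$ rather than to a prescribed residue class.

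The larger gap is the closing step. ``Uniqueness of the solution of the linear $q$-difference system with the common initial data'' is not available in the form you invoke it: once the variable $x$ is introduced the system lives in $x$ with initial datum at $x=0$, while the theorem concerns the value at $x=1$, where the equation degenerates (the coefficient recurrence carries the factor $1-q^{7n}$ and the relevant conjugating products have a pole at $x=1$). The product side satisfies the single first-order equation $P(x)=(1+xq)(1+xq^{2})(1+xq^{4})P(xq^{7})$, not a $7\times 7$ system, and the real work is to show that the difference-condition generating function, which satisfies the telescoped equation $(\mathrm{eq}_{N,r})$, has the same value at $x=1$. In the paper this occupies all of Section~3: an induction on the number of residue classes, conjugating factors, recurrences on Taylor coefficients, $q$-binomial identities, and Appell's comparison theorem to control the limit $x\to 1^{-}$. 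Your proposed subset-lattice refinement of the product does not substitute for this step, so as written the recursion does not close and the decisive analytic argument is missing.
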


The author took the first step towards the generalisation of Andrews' theorem to overpartitions by generalising Theorem~\ref{andrews7} in~\cite{Dousse} (again, the case $k=0$ corresponds to Theorem~\ref{andrews7}):

\begin{theorem}
\label{dousse7}
Let $D_2(k,n)$ denote the number of overpartitions of $n$ into parts $\equiv 1,2,4 \mod 7$, with $k$ non-overlined parts.
Let $E_2(k,n)$ denote the number of overpartitions of $n$ with $k$ non-overlined parts of the form $n= \lambda_1 +\cdots+ \lambda_s$, where 
\begin{equation*}
\lambda_i - \lambda_{i+1} \geq 
\begin{cases}
0 + 7 \chi(\ov{\lambda_{i+1}})\ \text{if}\ \lambda_{i+1} \equiv 1,2,4\ (mod\ 7),\\
5 + 7 \chi(\ov{\lambda_{i+1}})\ \text{if}\ \lambda_{i+1} \equiv 3\ (mod\ 7),\\
3 + 7 \chi(\ov{\lambda_{i+1}})\ \text{if}\ \lambda_{i+1} \equiv 5,6\ (mod\ 7),\\
8 + 7 \chi(\ov{\lambda_{i+1}})\ \text{if}\ \lambda_{i+1} \equiv 0\ (mod\ 7),
\end{cases}
\end{equation*}
where $\chi(\ov{\lambda_{i+1}})=1$ if $\lambda_{i+1}$ is overlined and $0$ otherwise.
Then $C(k,n)=D(k,n)$.
\end{theorem}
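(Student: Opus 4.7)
The plan is to adapt to overpartitions the smallest-part recurrence method that Andrews used for Theorem~\ref{andrews7}, along the lines of the author's treatment of the Schur overpartition theorem in~\cite{Dousse}. I would introduce the bivariate generating function
\begin{equation*}
f(x,q)=\sum_{k,n\geq 0} E_2(k,n)\, x^k q^n,
\end{equation*}
with $x$ marking the number of non-overlined parts and $q$ the size, together with refined series $f_{r,\epsilon}(x,q)$ indexed by the residue $r\in\{0,1,\dots,6\}$ of the smallest part $\lambda_s$ modulo $7$ and by $\epsilon\in\{0,1\}$ recording whether $\lambda_s$ is overlined. The whole generating function is then $f(x,q)=1+\sum_{r,\epsilon}f_{r,\epsilon}(x,q)$, the $1$ accounting for the empty overpartition.

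The main step is to derive a system of $q$-difference equations for the fourteen auxiliary series $f_{r,\epsilon}$. For each pair $(r,\epsilon)$, the minimal-difference rule in Theorem~\ref{dousse7} prescribes a specific lower bound $\delta_{r,\epsilon}$ on $\lambda_{s-1}-\lambda_s$; removing $\lambda_s$ and shifting the remaining parts down converts the restriction on $\lambda_{s-1}$ into a $q$-shift, and, when the newly exposed smallest part is itself non-overlined, into an $x$-shift. This yields, for each $(r,\epsilon)$, an expression of $f_{r,\epsilon}(x,q)$ as a $q$-shifted linear combination of those $f_{r',\epsilon'}(x,q)$ that are compatible with the required gap. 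Careful combination of these relations should collapse to a clean functional equation for $f(x,q)$ relating it to $f(xq^7,q)$.

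In parallel, I would compute the generating function for the $D_2$-side,
\begin{equation*}
F(x,q)=\prod_{j\geq 0}\frac{(1+q^{7j+1})(1+q^{7j+2})(1+q^{7j+4})}{(1-xq^{7j+1})(1-xq^{7j+2})(1-xq^{7j+4})},
\end{equation*}
split off its $j=0$ factors to express $F(x,q)$ in terms of $F(xq^7,q)$, and verify that $F$ satisfies the same functional equation as $f$. With matching initial conditions $f(0,q)=F(0,q)=1$, the uniqueness of the power-series solution forces $f=F$ coefficientwise, giving $E_2(k,n)=D_2(k,n)$.

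The main obstacle will be the size and intricacy of the $q$-difference system: with seven residues and two overlining states one has up to fourteen interrelated series, and the real work is to combine them consistently into a single recurrence whose solution is the explicit infinite product. Once that reduction is achieved, verifying that $F$ satisfies the same equation reduces to routine manipulations with Euler-type products, and the identity follows.
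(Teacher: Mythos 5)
Your overall strategy---recurrences on the smallest part leading to a $q$-difference equation, then comparison with the product side---is the right family of ideas, and it is how this theorem is actually obtained (in this paper as the special case $N=7$, $a(1)=1$, $a(2)=2$, $a(3)=4$ of Theorem~\ref{dousse}, originally in~\cite{Dousse}). But two of your choices would derail the execution. The first is the variable bookkeeping: you let $x$ mark only the non-overlined parts. The smallest-part step (Lemma~\ref{lemma1}) deletes $\lambda_s$ and subtracts a multiple of $7$ from \emph{every} remaining part, so the exponent of $q$ drops by $7w$ times the \emph{total} number of remaining parts; this becomes the substitution $x\mapsto xq^{7w}$ only if $x$ counts all parts. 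With your $x$ the change in $n$ depends on a statistic your series does not record, and the system never closes into $q$-difference equations. One needs two auxiliary variables, as in~\eqref{def_fi}: $d$ marking non-overlined parts (your $x$) and $x$ marking the total number of parts. (Also, the natural refinement is by the eight conditions ``smallest part $\geq \alpha(i)$'' for $\alpha(i)\in\{1,\dots,7, 8\}$, with overlining handled inside the recurrence, rather than fourteen residue-and-overline classes.)

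The second, more serious gap is the endgame. You propose to verify that the explicit product satisfies the same functional equation as $f$ and invoke uniqueness of the power-series solution. Once the correct variables are in place, the equation obtained is~\eqref{qdiff} with $N=7$, $r=3$, relating $f(x)$ to $f(xq^{7})$, $f(xq^{14})$ and $f(xq^{21})$, and its solution $f_{a(1)}(d,x,q)$ is \emph{not} an infinite product in $x$; only the specialisation $x=1$ has product form. So there is no candidate to plug in and check, and ``routine manipulations with Euler-type products'' are not available. The paper instead proves Theorem~\ref{main} by induction on the number $r$ of residue classes: a chain of substitutions (Lemmas~\ref{lemmaF}--\ref{equalAA'}) converts a solution of $(\mathrm{eq}_{N,r})$ into a solution of $(\mathrm{eq}_{N,r-1})$, and Appell's comparison theorem then yields $f(1)=\frac{(-q^{a(r)};q^N)_{\infty}}{(dq^{a(r)};q^N)_{\infty}}\,g(1)$. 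That reduction is the bulk of the proof and is missing from your plan; your ``iterate the functional equation'' idea works only in the single-residue base case, as in~\eqref{r1}.
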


Now we need to introduce some notations due to Andrews in order to state his general theorem and its generalisation to overpartitions.
Let $A=\lbrace a(1), ..., a(r) \rbrace$ be a set of $r$ distinct integers such that $\sum_{i=1}^{k-1} a(i) < a(k)$ for all $1 \leq k \leq r$ and the $2^r -1$ possible sums of distinct elements of $A$ are all distinct. We denote this set of sums by $A'=\lbrace \alpha(1), ..., \alpha(2^r -1) \rbrace$, where $\alpha(1) < \cdots < \alpha(2^r-1)$. Let us notice that $\alpha(2^k)=a(k+1)$ for all $0 \leq k \leq r-1$ and that any $\alpha$ between $a(k)$ and $a(k+1)$ has largest summand $a(k)$.
Let $N$ be a positive integer with $N \geq \alpha(2^r-1) = a(1) +\cdots+a(r).$ Let $A_N$ denote the set of positive integers congruent to some $a(i) \mod N$ and $A'_N$ the set of positive integers congruent to some $\alpha(i) \mod N.$ Let $\beta_N(m)$ be the least positive residue of $m \mod N$. If $\alpha \in A'$, let $w(\alpha)$ be the number of terms appearing in the defining sum of $\alpha$ and $v(\alpha)$ the smallest $a(i)$ appearing in this sum.

To illustrate these notations in the remainder of this paper, it might be useful to consider the example where $a(k)=2^{k-1}$ for $1 \leq k \leq r$ and $\alpha(k)=k$ for $1 \leq k \leq 2^r-1$.

We are now able to state Andrews' theorem.
\begin{theorem}[Andrews]
\label{andrews}
Let $D(A_N;n)$ denote the number of partitions of $n$ into distinct parts taken from $A_N$. Let $E(A'_N;n)$ denote the number of partitions of $n$ into parts taken from $A'_N$ of the form $n=\lambda_1+\cdots+ \lambda_s$, such that
$$\lambda_i - \lambda_{i+1} \geq N w(\beta_N(\lambda_{i+1}))+v(\beta_N(\lambda_{i+1}))-\beta_N(\lambda_{i+1}).$$
Then $D(A_N;n)= E(A'_N;n)$.
\end{theorem}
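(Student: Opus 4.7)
The plan is to prove the theorem via $q$-difference equations, following Andrews's own approach. The refined identity I would actually establish is combinatorially stronger: each part $\lambda_i$ on the $E$-side with $\beta_N(\lambda_i) = \alpha(j)$ should carry weight $x^{w(\alpha(j))}$, matching the number of parts on the $D$-side. Writing $f_D(x,q)$ and $f_E(x,q)$ for the corresponding bivariate generating functions, the $D$-side factors immediately as
\[f_D(x,q) = \prod_{i=1}^r \prod_{n \geq 0}\bigl(1 + xq^{a(i)+nN}\bigr),\]
and the main task is to establish the same product formula for $f_E(x,q)$.

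First I would introduce a refined family of generating functions on the $E$-side. For each $1 \leq j \leq 2^r-1$, let $E_{\alpha(j)}(x,q)$ denote the generating function for $E$-partitions whose smallest part is $\equiv \alpha(j) \pmod N$, and set $E_0(x,q) = 1$ for the empty partition, so that $f_E(x,q) = \sum_{j=0}^{2^r-1} E_{\alpha(j)}(x,q)$. The guiding principle is to analyse what happens when the smallest part $\lambda_s$ (with $\beta_N(\lambda_s) = \alpha(j)$) is removed: the gap condition forces the new smallest part to exceed $\lambda_s$ by at least $Nw(\alpha(j)) + v(\alpha(j)) - \alpha(j)$. Translating this into a substitution of the form $x \mapsto x q^{Nw(\alpha(j))}$, together with the appropriate residue shift and the weight factor contributed by $\lambda_s$ itself, yields a system of $q$-difference relations expressing each $E_{\alpha(j)}(x,q)$ in terms of shifted copies of $f_E$.

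The main obstacle, and the technical heart of the proof, is handling the bookkeeping in this system: after removing the smallest part, the residue of the next smallest part is constrained, and one must match carefully the factors $xq^{a(k)}$ appearing in the target product against the contributions of those $\alpha(j)$ with $v(\alpha(j)) = a(k)$. The key structural facts to exploit here are $\alpha(2^k) = a(k+1)$ and the observation that every $\alpha(j)$ strictly between $a(k)$ and $a(k+1)$ has largest summand $a(k)$; these let us reorganise the sum $\sum_j E_{\alpha(j)}(x,q)$ so that the symmetric polynomial $\prod_{i=1}^r(1 + xq^{a(i)})$ emerges as a common prefactor. This is where I expect the combinatorics to become delicate and where the hypothesis that the $2^r - 1$ subset sums of $A$ are distinct is used essentially.

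Once the system is consolidated, the conclusion becomes routine. One should obtain
\[f_E(x,q) = \prod_{i=1}^r\bigl(1 + xq^{a(i)}\bigr)\, f_E(xq^N,q), \qquad f_E(0,q) = 1,\]
which is exactly the $q$-difference equation and initial condition satisfied by $f_D(x,q)$; iterating the recurrence uniquely determines both sides to equal $\prod_{i=1}^r \prod_{n \geq 0}(1 + xq^{a(i)+nN})$. Setting $x = 1$ and comparing coefficients of $q^n$ gives $D(A_N;n) = E(A'_N;n)$.
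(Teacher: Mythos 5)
There is a genuine gap at the heart of your plan, namely the step where you claim that the system of relations obtained by removing the smallest part can be ``reorganised'' so that $\prod_{i=1}^r\bigl(1+xq^{a(i)}\bigr)$ emerges as a common prefactor, yielding the first-order equation $f_E(x,q)=\prod_{i=1}^r\bigl(1+xq^{a(i)}\bigr)f_E(xq^N,q)$. Removing a smallest part with residue $\alpha$ forces the next part to exceed it by $Nw(\alpha)+v(\alpha)-\alpha$, so the resulting relation involves $f_E(xq^{w(\alpha)N},q)$ (together with a residual constraint that the new smallest part be at least $v(\alpha)$); summing over $\alpha$ therefore produces an equation of order $r$ in the shift $x\mapsto xq^N$, involving $f_E(xq^{jN},q)$ for every $1\le j\le r$, not just $j=1$. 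Already for Schur ($N=3$, $a(1)=1$, $a(2)=2$) the smallest-part analysis gives $f(x)=(1+xq+xq^2)f(xq^3)+(\text{term in }f(xq^6))$, and collapsing the $f(xq^6)$ term into $x^2q^3f(xq^3)$ is equivalent to already knowing the product formula $f(xq^3)=(1+xq^4)(1+xq^5)f(xq^6)$ --- i.e.\ to the theorem itself. No amount of regrouping of the sums $E_{\alpha(j)}$ using $\alpha(2^k)=a(k+1)$ eliminates the higher shifts; the distinct-subset-sums hypothesis does not do that work either. So the ``routine'' conclusion rests on an unproved identity that carries the entire content of the result. (Your refined weighting, where a part with residue $\alpha$ carries $x^{w(\alpha)}$, is a genuinely stronger statement than the theorem and is true --- it is essentially the Corteel--Lovejoy refinement --- but establishing the first-order equation for it requires a nontrivial bijective or iterative argument that your sketch does not supply.)

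For comparison, the paper (which obtains this theorem as the $k=0$, i.e.\ $d=0$, case of Theorem~\ref{dousse}) accepts the $r$-th order equation $(\mathrm{eq}_{N,r})$ as the endpoint of the combinatorial analysis (Lemmas~\ref{lemma1}--\ref{conj}) and then spends all of Section~3 showing, by induction on $r$, that any solution with $f(0)=1$ satisfies $f(1)=\prod_{k=1}^r(-q^{a(k)};q^N)_\infty/(dq^{a(k)};q^N)_\infty$: it multiplies by an explicit infinite product, passes to recurrences on Taylor coefficients, matches two recurrences term by term using $q$-binomial identities, and extracts the limit via Appell's comparison theorem. That route only evaluates the generating function at $x=1$ and so proves the unrefined identity, but it never needs the first-order equation you are assuming. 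If you want to pursue your stronger refined statement, you should look to the weighted-words method of Alladi--Gordon or the iterative-bijective approach of Corteel--Lovejoy rather than to smallest-part recurrences.
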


As Theorems~\ref{schur} and~\ref{andrews7} generalise to overpartitions, it was interesting to know whether it is also possible to generalise Theorem~\ref{andrews}. We answer this question by proving the following.

\begin{theorem}
\label{dousse}
Let $D(A_N;k,n)$ denote the number of overpartitions of $n$ into parts taken from $A_N$, having $k$ non-overlined parts. Let $E(A'_N;k,n)$ denote the number of overpartitions of $n$ into parts taken from $A'_N$ of the form $n=\lambda_1+\cdots+ \lambda_s$, having $k$ non-overlined parts, such that
$$\lambda_i - \lambda_{i+1} \geq N w\left(\beta_N(\lambda_{i+1}) -1 +\chi(\ov{\lambda_{i+1}}) \right)+v(\beta_N(\lambda_{i+1}))-\beta_N(\lambda_{i+1}),$$
where $\chi(\ov{\lambda_{i+1}})=1$ if $\lambda_{i+1}$ is overlined and $0$ otherwise.
Then $D(A_N;k,n)= E(A'_N;k,n)$.
\end{theorem}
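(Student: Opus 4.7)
The plan is to adapt Andrews' strategy for Theorem~\ref{andrews} to overpartitions, extending the $q$-difference equation and recurrence approach used for Theorem~\ref{dousse7}. The argument proceeds by classifying the overpartitions counted by $E(A'_N;k,n)$ according to the residue modulo $N$ of their smallest part.

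For each $\alpha \in A'$ (with an additional convention $\alpha=0$ for the empty overpartition), introduce a generating function $f_\alpha(d,x;q)$ enumerating those overpartitions of Theorem~\ref{dousse} whose smallest part $\lambda_s$ satisfies $\beta_N(\lambda_s)=\alpha$, where $d$ marks the number of non-overlined parts, $x$ tracks the shift of $\lambda_s$ in blocks of $N$, and $q$ records the size. Peeling off the smallest part, and carefully distinguishing whether it is overlined and which $\alpha(j)\in A'$ its residue equals, produces a system of recurrences relating $f_{\alpha(j)}(d,x;q)$ to $f_{\alpha(j')}(d,xq^N;q)$ for indices $j'$ dictated by the structure of the defining sums of elements of $A'$. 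The shift $-1+\chi(\ov{\lambda_{i+1}})$ appearing inside $w$ in the difference condition is exactly what is needed for these recurrences: promoting an overlined smallest part to a non-overlined one costs one extra unit of $Nw$ in the required gap, reflecting the passage from $\alpha(j)$ to the next smaller element $\alpha(j-1)$ of $A'$ when one subsummand is absorbed by an increased difference.

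Telescoping these relations reduces the system to a single $q$-difference equation for the total generating function $f(d,x;q):=\sum_\alpha f_\alpha(d,x;q)$, schematically of the form
\[
  f(d,x;q)=P(d,x;q)\,f(d,xq^N;q)
\]
for an explicit finite product $P$ determined by $A$ and $N$. Iterating this equation yields an infinite product expression for $f(d,x;q)$; specializing at $x=1$ (or the appropriate boundary value) and simplifying with the facts $\alpha(2^k)=a(k+1)$ and that every $\alpha\in A'$ with $a(k)\le\alpha<a(k+1)$ has largest summand $a(k)$ collapses the product to $\prod_{a\in A_N}\frac{1+q^a}{1-dq^a}$, which is precisely the generating function for $D(A_N;k,n)$. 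Extracting the coefficient of $d^k q^n$ gives the claimed identity.

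The main obstacle is setting up and verifying the recurrences uniformly in $r$: in the earlier cases $r=1$ (Theorem~\ref{schur_over}) and $r=2$ (Theorem~\ref{dousse7}), the $2^r-1$ residues can be handled by direct case analysis, but the general case requires a careful combinatorial argument that links each transition $\alpha(j)\to\alpha(j')$ to the specific summand of $\alpha(j)$ being removed, and that keeps track of how the overlined/non-overlined distinction interacts with these removals. Solving the resulting $q$-difference equation in closed form, and recognising the result as the product side, is then essentially a book-keeping exercise, but the combinatorial derivation of the recurrences is where the bulk of the work will lie.
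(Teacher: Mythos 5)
Your combinatorial setup coincides with the paper's: you classify the overpartitions counted by $E(A'_N;k,n)$ by their smallest part, peel it off while distinguishing overlined from non-overlined, and correctly identify why the shift $-1+\chi(\ov{\lambda_{i+1}})$ inside $w$ is exactly what the peeling argument needs; this is the content of Lemmas~\ref{lemma1} and~\ref{lemma2}. The gap comes immediately afterwards. You assert that the system telescopes to a \emph{first-order} $q$-difference equation $f(d,x;q)=P(d,x;q)\,f(d,xq^N;q)$ with $P$ a finite product, which you then iterate to an infinite product. That is false for $r\geq 2$: when the smallest part has residue $\alpha$ with $w(\alpha)=j$, removing it subtracts $Nw(\alpha)$ or $N(w(\alpha)-1)$ from every remaining part, so the argument of the generating function is shifted to $xq^{jN}$ or $xq^{(j-1)N}$. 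The telescoped equation~\eqref{qdiff} therefore involves $f_{a(1)}(xq^{jN})$ for \emph{all} $1\leq j\leq r$ simultaneously, with coefficients built from $q$-binomial coefficients; it is an order-$r$ equation that cannot be solved by naive iteration. (Already for Schur's theorem, $r=2$, the relevant equation is second order.)

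As a result, the step you describe as ``essentially a book-keeping exercise'' is where the entire difficulty lies, and your proposal contains no method for it. The paper's Section~3 proves by induction on $r$ that any solution of $(\mathrm{eq}_{N,r})$ with $f(0)=1$ takes the value $\prod_{k=1}^r (-q^{a(k)};q^N)_{\infty}/(dq^{a(k)};q^N)_{\infty}$ at $x=1$: it passes through a chain of substitutions ($f\to F$, then to Taylor coefficients $A_n$, then $A_n\to A'_n\to a_n$, then back to functions $G\to g$) that converts a solution of the order-$r$ equation into a solution of the order-$(r-1)$ equation; the crux is Lemma~\ref{equalAA'}, a genuinely nontrivial $q$-binomial computation showing two recurrences agree, and the evaluation at $x=1$ requires Appell's comparison theorem because the coefficient of $F(x)$ contains a factor $(1-x)$ that vanishes there. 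None of this is present in, or suggested by, your outline, so the argument as proposed does not go through beyond the derivation of the initial recurrences.
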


Theorem~\ref{schur} (resp. Theorem~\ref{schur_over}) corresponds to $N=3$, $a(1)= 1$, $a(2)=2$ and Theorem~\ref{andrews7} (resp. Theorem~\ref{dousse7}) corresponds to $N=7$, $a(1)=1$, $a(2)=2$, $a(3)=4$ in Theorem~\ref{andrews} (resp. in Theorem~\ref{dousse}).
Again, the case $k=0$ of Theorem~\ref{dousse} gives Theorem~\ref{andrews}.

The remainder of this paper is devoted to the proof of Theorem~\ref{dousse}. First, we give the $q$-differential equation satisfied by the generating function for overpartitions enumerated by $E(A'_N;k,n)$. Then we prove by induction on $r$ that a function satisfying this $q$-difference equation is equal to $ \prod_{j=1}^r \frac{(-q^{a(j)};q^N)_{\infty}}{(dq^{a(j)};q^N)_{\infty}},$ which is the generating function for overpartitions counted by $D(A_N;k,n)$. Here we use the classical notation $(a;q)_{n} = \prod_{j=0}^{n-1} (1-aq^j).$

\section{The $q$-difference equation satisfied by the generating function}

Let $p_{\alpha(i)}(k,m,n)$ denote the number of overpartitions counted by $E(A'_N;k,n)$ having $m$ parts such that the smallest part is $\geq \alpha(i)$. Let us define $\alpha(2^r):= a(r+1)= N+ a(1).$

The following lemma holds.

\begin{lemma}
\label{lemma1}
If $1 \leq i \leq 2^r-1$, then
\begin{equation}
\label{eq1}
\begin{aligned}
p_{\alpha(i)}(k,m,n) &- p_{\alpha(i+1)}(k,m,n)
\\ =& ~p_{v(\alpha(i))}\big(k,m-1,n-(m-1)Nw(\alpha(i)) - \alpha(i)\big)
\\+& p_{v(\alpha(i))}\big(k-1,m-1,n-(m-1)N(w(\alpha(i))-1) - \alpha(i)\big),
\end{aligned}
\end{equation}

\begin{equation}
\label{eq2}
p_{\alpha(2^r)} (k,m,n) = p_{a(1)}(k,m,n-mN).
\end{equation}
\end{lemma}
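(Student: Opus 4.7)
Both parts of the lemma will be proved by the same ``remove the smallest part and rescale'' technique.

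For \eqref{eq1}, I first observe that the hypothesis $N \geq \alpha(2^r - 1)$ forces every $\alpha(j)$ to lie in $[1,N]$, so consecutive values $\alpha(i)<\alpha(i+1)$ (with the convention $\alpha(2^r)=N+a(1)$) differ by strictly less than $N$. Hence no element of $A'_N$ lies strictly between them, and $p_{\alpha(i)}(k,m,n)-p_{\alpha(i+1)}(k,m,n)$ counts exactly those overpartitions enumerated by $p_{\alpha(i)}(k,m,n)$ whose smallest part $\lambda_s$ equals $\alpha(i)$, either non-overlined or overlined. This splits the difference into two summands, which I will identify with the two terms on the right-hand side of \eqref{eq1}.

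Consider first the non-overlined case ($\chi(\overline{\lambda_s})=0$, $\beta_N(\lambda_s)=\alpha(i)$). The difference condition of Theorem~\ref{dousse} forces
$$\lambda_{s-1}\geq\alpha(i)+N(w(\alpha(i))-1)+v(\alpha(i))-\alpha(i)=N(w(\alpha(i))-1)+v(\alpha(i)).$$
I delete $\alpha(i)$ and subtract $N(w(\alpha(i))-1)$ from each of the remaining $m-1$ parts. Because the shift is a multiple of $N$, every residue $\beta_N(\lambda_j)$, and hence every value of $w$ and $v$, is preserved; all internal difference conditions continue to hold and overlinings are unaffected. The result is an overpartition enumerated by $p_{v(\alpha(i))}\bigl(k-1,m-1,n-(m-1)N(w(\alpha(i))-1)-\alpha(i)\bigr)$, and the map is reversible (reverse the shift and prepend a non-overlined $\alpha(i)$; the bound above supplies the correct gap at the insertion point). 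Running the identical argument in the overlined case with shift $Nw(\alpha(i))$ produces $p_{v(\alpha(i))}\bigl(k,m-1,n-(m-1)Nw(\alpha(i))-\alpha(i)\bigr)$, and summing the two gives \eqref{eq1}.

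Identity \eqref{eq2} is settled by the analogous, simpler trick of subtracting $N$ from each of the $m$ parts: since every part starts $\geq\alpha(2^r)=N+a(1)$, the results remain positive; the shift is a multiple of $N$ so residues, difference conditions and overlinings are preserved; and the new smallest part is $\geq a(1)=\alpha(1)$. The total weight drops by $mN$, which is exactly what \eqref{eq2} records. The only genuine subtlety throughout is checking that after the shift in \eqref{eq1} the new smallest part still lies in $A'_N$ and meets the bound $\geq v(\alpha(i))$: the bound is built into the difference condition, and $v(\alpha(i))=a(j)=\alpha(2^{j-1})\in A'$ for some $j$, so membership in $A'_N$ is automatic since shifts by multiples of $N$ preserve the $A'_N$ residue classes.
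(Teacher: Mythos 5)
Your proof is correct and follows essentially the same approach as the paper: identify the difference $p_{\alpha(i)}-p_{\alpha(i+1)}$ with overpartitions whose smallest part is exactly $\alpha(i)$, split according to whether that part is overlined, remove it and shift the remaining parts by $Nw(\alpha(i))$ or $N(w(\alpha(i))-1)$ respectively, and subtract $N$ from every part for \eqref{eq2}. The extra checks you supply (no element of $A'_N$ lies strictly between consecutive $\alpha$'s, and the maps are reversible) are correct and only make explicit what the paper leaves implicit.
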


\begin{proof}
Let us start by proving~\eqref{eq1}.
We observe that $p_{\alpha(i)}(k,m,n) - p_{\alpha(i+1)}(k,m,n)$ is the number of overpartitions of the form $n= \lambda_1 +\cdots+ \lambda_m$ enumerated by $p_{\alpha(i)}(k,m,n)$ such that the smallest part is equal to $\alpha(i)$.

If $\lambda_m = \ov{\alpha(i)}$ is overlined, then by definition of $E(A'_N;k,n)$,
$$\lambda_{m-1} \geq \alpha(i)+ N w(\alpha(i)) + v(\alpha(i)) - \alpha(i) =  N w(\alpha(i)) + v(\alpha(i)).$$
In that case we remove $\lambda_m=\ov{\alpha(i)}$ and subtract $N w(\alpha(i))$ from each remaining part. The number of parts is reduced to $m-1$, the number of non-overlined parts is still $k$, and the number partitioned is now $n-(m-1)N w(\alpha(i))-\alpha(i)$. Moreover the smallest part is now $\geq v(\alpha(i)).$ Therefore we have an overpartition counted by $p_{v(\alpha(i))}(k,m-1,n-(m-1)Nw(\alpha(i)) - \alpha(i))$.

If $\lambda_m = \alpha(i)$ is not overlined, then by definition of $E(A'_N;k,n)$,
$$\lambda_{m-1} \geq N \left(w(\alpha(i))-1\right) + v(\alpha(i)).$$
In that case we remove $\lambda_m=\alpha(i)$ and subtract $N (w(\alpha(i))-1)$ from each remaining part. The number of parts is reduced to $m-1$, the number of non-overlined parts is reduced $k-1$, and the number partitioned is now $n-(m-1)N (w(\alpha(i))-1)-\alpha(i)$. Moreover the smallest part is now $\geq v(\alpha(i)).$ Therefore we have an overpartition counted by $p_{v(\alpha(i))}(k-1,m-1,n-(m-1)N(w(\alpha(i))-1) - \alpha(i))$.

To prove~\eqref{eq2}, we consider a partition enumerated by $p_{\alpha(2^r)} (k,m,n)$ and subtract $N$ from each part. As $p_{\alpha(2^r)} (k,m,n) = p_{N+a(1)}(k,m,n)$, we obtain a partition enumerated by $p_{a(1)} (k,m,n-N).$
\end{proof}

For $|d|<1$, $|x|<1$, $|q|<1$, we define
\begin{equation}
\label{def_fi}
f_{\alpha(i)}(d,x,q)=f_{\alpha(i)}(x):=1+ \sum_{n=1}^{\infty} \sum_{m=1}^{\infty} \sum_{k=0}^{\infty} p_{\alpha(i)} (k,m,n) d^k x^m q^n.
\end{equation}

We want to find $f_{a(1)}(1)$, which is the generating function for all overpartitions counted by $E(A'_N;k,n)$. To do so, we  establish a $q$-difference equation relating $f_{a(1)}\left(xq^{jN}\right)$, for $j \geq 0$. Let us start by giving some relations between generating functions.

Lemma~\ref{lemma1} directly implies

\begin{lemma}
\label{lemma2}
If $1 \leq i \leq 2^r-1$, then
\begin{equation}
\label{eqf1}
f_{\alpha(i)}(x) - f_{\alpha(i+1)}(x)= xq^{\alpha(i)} f_{v(\alpha(i))}\left(xq^{Nw(\alpha(i))}\right) + dxq^{\alpha(i)} f_{v(\alpha(i))}\left(xq^{N\big(w(\alpha(i))-1 \big)}\right),
\end{equation}

\begin{equation}
\label{eqf2}
f_{\alpha(2^r)}(x) = f_{a(1)}\left(xq^N\right).
\end{equation}
\end{lemma}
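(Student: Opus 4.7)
The plan is to translate each recurrence of Lemma~\ref{lemma1} directly into a generating-function identity by multiplying by $d^k x^m q^n$ and summing over $k \geq 0$, $m \geq 1$, $n \geq 1$, using definition~\eqref{def_fi} of $f_{\alpha(i)}(x)$.

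For~\eqref{eqf1}, I first observe that the constant term $1$ appearing in~\eqref{def_fi} cancels between $f_{\alpha(i)}(x)$ and $f_{\alpha(i+1)}(x)$, so the difference equals the triple series whose coefficient on $d^k x^m q^n$ is $p_{\alpha(i)}(k,m,n)-p_{\alpha(i+1)}(k,m,n)$. I substitute~\eqref{eq1}, obtaining a sum of two triple series. In the first, I change variables by $m \mapsto m+1$ and $n \mapsto n + mNw(\alpha(i)) + \alpha(i)$; this pulls out a factor $xq^{\alpha(i)}$ and rewrites the summand as $p_{v(\alpha(i))}(k,m,n)\, d^k (xq^{Nw(\alpha(i))})^m q^n$, so that what remains is exactly $f_{v(\alpha(i))}(xq^{Nw(\alpha(i))})$. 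The second piece is handled the same way, except that the reindexing $k \mapsto k+1$ inserts an extra factor of $d$ and $w(\alpha(i))-1$ takes the place of $w(\alpha(i))$, producing the second term on the right-hand side of~\eqref{eqf1}.

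Identity~\eqref{eqf2} is even more direct. Multiplying~\eqref{eq2} by $d^k x^m q^n$ and shifting $n \mapsto n+mN$ converts $x^m$ into $(xq^N)^m$, turning the sum into $f_{a(1)}(xq^N)$; the constant terms on both sides are equal to $1$, so the identity follows.

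The only delicate point is the boundary $m=1$ of the reindexed sums in the first identity, which after the shift corresponds to $m'=0$ and thus falls outside the summation range of~\eqref{def_fi}. Inspecting this boundary directly shows it contributes $xq^{\alpha(i)} + dxq^{\alpha(i)}$, which is precisely what the constant $1$ appearing in each of $f_{v(\alpha(i))}(xq^{Nw(\alpha(i))})$ and $f_{v(\alpha(i))}(xq^{N(w(\alpha(i))-1)})$ supplies. So extending the range to include $m'=0$ under the convention that the empty partition contributes $1$ makes the reindexing exact. No further obstacle arises, since all manipulations are formal re-indexing of absolutely convergent series for $|d|,|x|,|q|<1$.
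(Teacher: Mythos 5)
Your proof is correct and follows exactly the route the paper intends: the paper simply asserts that Lemma~\ref{lemma1} ``directly implies'' Lemma~\ref{lemma2}, and your generating-function translation (multiplying~\eqref{eq1} and~\eqref{eq2} by $d^k x^m q^n$, summing, and reindexing) is the standard argument being elided. Your explicit check that the $m=1$ boundary term $xq^{\alpha(i)}+dxq^{\alpha(i)}$ is exactly accounted for by the constant $1$ in the definition~\eqref{def_fi} is a welcome detail the paper leaves implicit.
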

\noindent Adding equations~\eqref{eqf1} together for $1 \leq i \leq 2^{k-1}-1$ and using the fact that $\alpha\left(2^{k-1}\right)=a(k)$, we obtain

\begin{equation}
\label{eq3.5}
f_{a(1)}(x) - f_{a(k)}(x) =\sum_{\alpha < a(k)} \left( xq^{\alpha} f_{v(\alpha)}\left(xq^{Nw(\alpha)}\right) + dxq^{\alpha} f_{v(\alpha)}\left(xq^{N(w(\alpha)-1)}\right)\right).
\end{equation}
Let us now add equations~\eqref{eqf1} together for $2^{k-2} \leq i \leq 2^{k-1}-1$. This gives

\begin{equation}
\label{eq3.6}
f_{a(k-1)}(x) - f_{a(k)}(x) =\sum_{a(k-1) \leq \alpha < a(k)} \left( xq^{\alpha} f_{v(\alpha)}\left(xq^{Nw(\alpha)}\right) + dxq^{\alpha} f_{v(\alpha)}\left(xq^{N(w(\alpha)-1)}\right)\right).
\end{equation}
Every $a(k-1) < \alpha < a(k)$ is of the form $\alpha = a(k-1) + \alpha',$ with $\alpha' < a(k-1).$
Hence we can rewrite~\eqref{eq3.6} as

\begin{align*}
f_{a(k-1)}(x) &- f_{a(k)}(x)
\\&= xq^{a(k-1)} f_{a(k-1)}\left(xq^{N}\right) + dxq^{a(k-1)} f_{a(k-1)}\left(x\right)
\\&+ q^{a(k-1)-N} \sum_{\alpha' < a(k-1)} \left( xq^{\alpha'+N} f_{v(\alpha')}\left(xq^{N(w(\alpha')+1)}\right)+ dxq^{\alpha'+N} f_{v(\alpha')}\left(xq^{Nw(\alpha')}\right)\right)
\\&= xq^{a(k-1)} f_{a(k-1)}\left(xq^{N}\right) + dxq^{a(k-1)} f_{a(k-1)}\left(x\right)
\\&+ q^{a(k-1)-N} \left( f_{a(1)}\left(xq^N\right) - f_{a(k-1)}\left(xq^N\right) \right),
\end{align*}
where the last equality follows from~\eqref{eq3.5}.

Thus
\begin{equation}
\label{eq3.7}
\begin{aligned}
f_{a(k)}(x) &= \left(1-dxq^{a(k-1)}\right) f_{a(k-1)}(x) - q^{a(k-1)-N} f_{a(1)}\left(xq^N\right)
\\&+ q^{a(k-1)-N}\left(1-xq^N\right) f_{a(k-1)}\left(xq^N\right).
\end{aligned}
\end{equation}

Remember we want to establish a $q$-difference equation relating functions $f_{a(1)}\left(xq^{kN}\right)$ for $k \geq 0$. Before this, we must recall some facts about $q$-binomial coefficients defined by
$${m \brack r}_q :=
\begin{cases}
\frac{\left(1-q^m\right)\left(1-q^{m-1}\right) \dots \left(1-q^{m-r+1}\right)}{\left(1-q\right) \left(1-q^2\right) \dots \left(1-q^r\right)}\ \text{if}\ 0 \leq r \leq m,\\
0 \ \text{otherwise}.
\end{cases}$$
They are $q$-analogues to binomial coefficients and satisfy $q$-analogues of the Pascal triangle identity~\cite{Gasper}.

\begin{proposition}
\label{pascal}
For all integers $0 \leq r \leq m$,
\begin{equation}
\label{pascal1}
{m \brack r}_q = q^r {m-1 \brack r}_q + {m-1 \brack r-1}_q,
\end{equation}
\begin{equation}
\label{pascal2}
{m \brack r}_q ={m-1 \brack r}_q + q^{m-r} {m-1 \brack r-1}_q.
\end{equation}
\end{proposition}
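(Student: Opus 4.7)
The plan is to verify both identities by direct algebraic manipulation from the defining product formula for ${m \brack r}_q$. First I would dispose of the boundary cases $r=0$ and $r=m$, where the convention ${m-1 \brack -1}_q={m-1 \brack m}_q=0$ makes both identities reduce immediately to $1=1$. For $1 \leq r \leq m-1$ I would factor out the common quantity
\[
C := \frac{(1-q^{m-1})(1-q^{m-2})\cdots(1-q^{m-r+1})}{(1-q)(1-q^2)\cdots(1-q^{r-1})}
\]
from each right-hand side, using the relations ${m-1 \brack r}_q = C \cdot \frac{1-q^{m-r}}{1-q^r}$ and ${m-1 \brack r-1}_q = C$, which are immediate from the definition.

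For~\eqref{pascal1}, the right-hand side then collapses to
\[
C \cdot \frac{q^r(1-q^{m-r}) + (1-q^r)}{1-q^r} = C \cdot \frac{1-q^m}{1-q^r},
\]
via the elementary rearrangement $q^r(1-q^{m-r}) + (1-q^r) = 1-q^m$. For~\eqref{pascal2}, the right-hand side collapses to
\[
C \cdot \frac{(1-q^{m-r}) + q^{m-r}(1-q^r)}{1-q^r} = C \cdot \frac{1-q^m}{1-q^r},
\]
via $(1-q^{m-r}) + q^{m-r}(1-q^r) = 1-q^m$. In both cases, the resulting expression $C \cdot \frac{1-q^m}{1-q^r}$ is precisely ${m \brack r}_q$ by definition, closing the argument.

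I expect no substantial obstacle: the proof is a standard short verification that appears in essentially every reference on $q$-series, such as~\cite{Gasper}, and both identities ultimately reduce to decomposing $1-q^m$ as either $q^r(1-q^{m-r}) + (1-q^r)$ or $(1-q^{m-r}) + q^{m-r}(1-q^r)$. Alternative routes via induction on $m$, or via the combinatorial interpretation of ${m \brack r}_q$ as the generating function for partitions fitting inside an $r \times (m-r)$ rectangle (split according to whether the largest part equals $m-r$ or whether the number of parts equals $r$), are available but strictly longer, so I would stick with the algebraic derivation sketched above.
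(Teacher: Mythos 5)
Your verification is correct. The factorisations ${m-1 \brack r}_q = C\cdot\frac{1-q^{m-r}}{1-q^{r}}$ and ${m-1 \brack r-1}_q = C$ follow at once from the product definition, and the two decompositions $q^{r}(1-q^{m-r})+(1-q^{r})=1-q^{m}$ and $(1-q^{m-r})+q^{m-r}(1-q^{r})=1-q^{m}$ do exactly the work needed for~\eqref{pascal1} and~\eqref{pascal2} respectively. There is nothing in the paper to compare against: the proposition is stated without proof and simply attributed to~\cite{Gasper}, so your computation supplies an argument the paper omits. One pedantic caveat about the statement rather than your proof: in the degenerate case $m=r=0$ the convention ${-1 \brack -1}_q={-1\brack 0}_q=0$ makes both right-hand sides vanish while the left-hand side equals $1$, so the identities should really be read with $m\geq 1$ (the only case the paper ever uses); your boundary-case discussion tacitly assumes this, which is worth saying explicitly.
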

As $q \rightarrow 1$ this is exactly Pascal's identity.

We are now ready to state the key lemma which will lead to the desired $q$-difference equation.
\begin{lemma}
\label{conj}
For $1 \leq k \leq r+1$, we have
\begin{equation}
\label{eq}
\begin{aligned}
\prod_{j=1}^{k-1} &\left(1-dxq^{a(j)}\right) f_{a(1)}(x) = f_{a(k)}(x) 
\\ + \sum_{j=1}^{k-1} &\left( \sum_{m=0}^{k-j-1} d^m \sum_{\substack{\alpha < a(k) \\ w(\alpha)=j+m}} xq^{\alpha} \left( (-x)^{m-1} {j+m-1 \brack m-1}_{q^N} + (-x)^m {j+m \brack m}_{q^N} \right) \right)
\\ &\times \prod_{h=1}^{j-1} \left(1-xq^{hN}\right) f_{a(1)}\left(xq^{jN}\right).
\end{aligned}
\end{equation}
\end{lemma}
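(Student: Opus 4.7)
The natural approach is induction on $k$, with recurrence~(\ref{eq3.7}) driving the inductive step and Proposition~\ref{pascal} doing the combinatorial bookkeeping. The base case $k=1$ is immediate, since both sides of~(\ref{eq}) reduce to $f_{a(1)}(x)$: the product on the left and the outer sum on the right are both empty.

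For the inductive step, assume~(\ref{eq}) for a given $k$ with $1 \leq k \leq r$, and write its right-hand side as $f_{a(k)}(x) + \Sigma_k(x)$, where $\Sigma_k(x)$ denotes the double sum. Multiplying by $(1-dxq^{a(k)})$ introduces the next factor on the left, and rewriting $(1-dxq^{a(k)}) f_{a(k)}(x)$ via~(\ref{eq3.7}) with index $k+1$ reduces the target to
\[
q^{a(k)-N} f_{a(1)}\bigl(xq^N\bigr) - q^{a(k)-N}\bigl(1-xq^N\bigr) f_{a(k)}\bigl(xq^N\bigr) + \bigl(1-dxq^{a(k)}\bigr) \Sigma_k(x) = \Sigma_{k+1}(x).
\]
To eliminate $f_{a(k)}(xq^N)$, I would apply the inductive hypothesis once more with $x$ replaced by $xq^N$, expressing $f_{a(k)}(xq^N)$ as $\prod_{j=1}^{k-1}(1-dxq^{a(j)+N}) f_{a(1)}(xq^N) - \Sigma_k(xq^N)$. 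Both sides of the target then become explicit linear combinations of the functions $f_{a(1)}(xq^{jN})$, and it suffices to compare coefficients level by level.

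Two discrepancies between $\Sigma_k$ and $\Sigma_{k+1}$ must be reconciled in this comparison. First, the inner range $\alpha < a(k)$ widens to $\alpha < a(k+1)$, adding exactly the $\alpha = a(k) + \alpha'$ with $\alpha' < a(k)$; these new indices, and $\alpha = a(k)$ itself, are produced by the shifted sum $\Sigma_k(xq^N)$ (whose $\alpha'$ re-emerges as $a(k) + \alpha'$ once the prefactor $q^{a(k)-N}$ is absorbed) and by the isolated term $q^{a(k)-N} f_{a(1)}(xq^N)$ (which accounts for $\alpha = a(k)$, $w(\alpha) = 1$). Secondly, the product $\prod_{h=1}^{j-1}(1-xq^{hN})$ is one factor longer in $\Sigma_{k+1}$ at level $j$ than in the corresponding shifted level $j-1$ of $\Sigma_k(xq^N)$; the missing $(1-xq^{jN})$ is supplied by the prefactor $-(1-xq^N)$ in front of $f_{a(k)}(xq^N)$.

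The main obstacle is the residual $q$-binomial identity that remains once the ranges and shifts are aligned. For fixed $j$ and $\alpha$ in the enlarged range, the prescribed pair of weights
\[
(-x)^{m-1}{j+m-1 \brack m-1}_{q^N}, \quad (-x)^m {j+m \brack m}_{q^N}
\]
in $\Sigma_{k+1}(x)$ must be recovered from the contributions of $(1-dxq^{a(k)}) \Sigma_k(x)$ (whose $(1-dxq^{a(k)})$ factor redistributes the $d^m$ coefficients between adjacent levels) and of $-q^{a(k)-N}(1-xq^N) \Sigma_k(xq^N)$ (whose $x \mapsto xq^N$ shift rewrites the $q$-binomials in shifted arguments). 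Identities~(\ref{pascal1}) and~(\ref{pascal2}) of Proposition~\ref{pascal} combine precisely in this setting: one merges the two consecutive $d^m$ contributions produced by the multiplication, while the other absorbs the $q^N$ shift in the $q$-binomials. Once this identity is verified for every admissible $j$ and $\alpha$, the inductive step, and hence the lemma, follows.
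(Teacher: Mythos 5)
Your plan is correct and follows essentially the same route as the paper's proof: induction on $k$, with~\eqref{eq3.7} driving the inductive step, the induction hypothesis applied both at $x$ and at $xq^N$ to eliminate $f_{a(k)}(xq^N)$, the re-indexing $\alpha \mapsto a(k)+\alpha$ to enlarge the range to $\alpha < a(k+1)$, and a $q$-Pascal identity to merge the resulting $q$-binomial coefficients. The only cosmetic discrepancy is that the paper's final merge uses only identity~\eqref{pascal1} of Proposition~\ref{pascal} (applied twice), not both identities as you suggest.
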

\begin{proof}
We prove this lemma by induction on $k$.
For $k=1$, this reduces to $f_{a(1)}(x) = f_{a(1)}(x).$
Let us assume that~\eqref{eq} is true for some $1 \leq k \leq r$ and show it also holds for $k+1$.
In the following let
\begin{align*}
s_k(x) : = \sum_{j=1}^{k-1} &\left( \sum_{m=0}^{k-j-1} d^m \sum_{\substack{\alpha < a(k) \\ w(\alpha)=j+m}} xq^{\alpha} \left( (-x)^{m-1} {j+m-1 \brack m-1}_{q^N} + (-x)^m {j+m \brack m}_{q^N} \right) \right)
\\ &\times \prod_{h=1}^{j-1} \left(1-xq^{hN}\right) f_{a(1)}\left(xq^{jN}\right).
\end{align*}
Therefore we want to prove that
$$\prod_{j=1}^{k} \left(1-dxq^{a(j)}\right) f_{a(1)}(x) = f_{a(k+1)}(x) + s_{k+1}(x).$$
We have
\begin{align*}
\prod_{j=1}^{k} &\left(1-dxq^{a(j)}\right) f_{a(1)}(x) - f_{a(k+1)}(x)
\\=& \left(1-dxq^{a(k)}\right) \left( \prod_{j=1}^{k-1} \left(1-dxq^{a(j)}\right) f_{a(1)}(x) - f_{a(k)}(x) \right)
\\&+ \left(1-dxq^{a(k)}\right) f_{a(k)}(x) -f_{a(k+1)}(x)
\\=& \left(1-dxq^{a(k)}\right) s_k(x)
\\&+ q^{a(k)-N} f_{a(1)}\left(xq^N\right) - q^{a(k)-N}\left(1-xq^N \right) f_{a(k)}(xq^N),
\end{align*}
where the last equality follows from the induction hypothesis and equation~\ref{eq3.7}.
Thus
\begin{align*}
\prod_{j=1}^{k} &\left(1-dxq^{a(j)}\right) f_{a(1)}(x) - f_{a(k+1)}(x)
\\=& \left(1-dxq^{a(k)}\right) s_k(x)+ q^{a(k)-N} f_{a(1)}\left(xq^N\right)
\\&- q^{a(k)-N}\left(1-xq^N \right) \left( \prod_{j=1}^{k-1} \left(1-dxq^{N+a(j)}\right) f_{a(1)}\left(xq^N\right) -s_k\left(xq^N\right)\right)
\\=& \left(1-dxq^{a(k)}\right) s_k(x)+ q^{a(k)-N}\left(1-xq^N \right) s_k\left(xq^N\right)
\\&+ q^{a(k)-N}  \left( 1- \left(1-xq^N \right) \prod_{j=1}^{k-1} \left(1-dxq^{N+a(j)}\right)\right) f_{a(1)}\left(xq^N\right)
\\=& \left(1-dxq^{a(k)}\right) s_k(x)+ q^{a(k)-N}\left(1-xq^N \right) s_k\left(xq^N\right)
\\&+ q^{a(k)-N}  \left( 1- \left(1-xq^N \right) \left( 1 + \sum_{m=1}^{k-1}\sum_{\substack{\alpha < a(k) \\ w(\alpha)=m}} (-dxq^N)^mq^{\alpha}\right) \right) f_{a(1)}\left(xq^N\right)
\\=& \left(1-dxq^{a(k)}\right) s_k(x)+ q^{a(k)-N}\left(1-xq^N \right) s_k\left(xq^N\right)
\\&+ q^{a(k)-N}  \left( xq^N  + \sum_{m=1}^{k-1} d^m \sum_{\substack{\alpha < a(k) \\ w(\alpha)=m}}  xq^{\alpha+N} \left((-xq^N)^{m-1} + (-xq^N)^m\right) \right) f_{a(1)}\left(xq^N\right)
\\=& \left(1-dxq^{a(k)}\right) s_k(x)+ q^{a(k)-N}\left(1-xq^N \right) s_k\left(xq^N\right)
\\&+  \left( xq^{a(k)}  + \sum_{m=1}^{k-1} d^m \sum_{\substack{a(k) <\alpha' < a(k+1) \\ w(\alpha')=m+1}}  xq^{\alpha'} \left((-xq^N)^{m-1} + (-xq^N)^m\right) \right) f_{a(1)}\left(xq^N\right)
\end{align*}
\begin{align*}
=& \sum_{j=1}^{k-1} \left( \sum_{m=0}^{k-j-1} d^m \sum_{\substack{\alpha < a(k) \\ w(\alpha)=j+m}} xq^{\alpha} \left( (-x)^{m-1} {j+m-1 \brack m-1}_{q^N} + (-x)^m {j+m \brack m}_{q^N} \right) \right)
\\ &\times \prod_{h=1}^{j-1} \left(1-xq^{hN}\right) f_{a(1)}\left(xq^{jN}\right)
\\+& \sum_{j=1}^{k-1} \left( \sum_{m=0}^{k-j-1} d^{m+1} \sum_{\substack{\alpha < a(k) \\ w(\alpha)=j+m}} xq^{a(k)+\alpha} \left( (-x)^{m} {j+m-1 \brack m-1}_{q^N} + (-x)^{m+1} {j+m \brack m}_{q^N} \right) \right)
\\ &\times \prod_{h=1}^{j-1} \left(1-xq^{hN}\right) f_{a(1)}\left(xq^{jN}\right)
\\+& q^{a(k)-N}\left(1-xq^N \right) \times
\\&\sum_{j=1}^{k-1 }\left( \sum_{m=0}^{k-j-1} d^m \sum_{\substack{\alpha < a(k) \\ w(\alpha)=j+m}} xq^{\alpha+N} \left( (-xq^N)^{m-1} {j+m-1 \brack m-1}_{q^N} + (-xq^N)^m {j+m \brack m}_{q^N} \right) \right)
\\&\times \prod_{h=1}^{j-1} \left(1-xq^{(h+1)N}\right) f_{a(1)}\left(xq^{(j+1)N}\right)
\\+& \left( xq^{a(k)}  + \sum_{m=1}^{k-1} d^m \sum_{\substack{a(k) <\alpha' < a(k+1) \\ w(\alpha')=m+1}}  xq^{\alpha'} \left((-xq^N)^{m-1} + (-xq^N)^m\right) \right) f_{a(1)}\left(xq^N\right)
\end{align*}
\begin{align*}
=& \sum_{j=1}^{k-1} \left( \sum_{m=0}^{k-j-1} d^m \sum_{\substack{\alpha < a(k) \\ w(\alpha)=j+m}} xq^{\alpha} \left( (-x)^{m-1} {j+m-1 \brack m-1}_{q^N} + (-x)^m {j+m \brack m}_{q^N} \right) \right)
\\ &\times \prod_{h=1}^{j-1} \left(1-xq^{hN}\right) f_{a(1)}\left(xq^{jN}\right)
\\+& \sum_{j=1}^{k-1} \left( \sum_{m=1}^{k-j} d^{m} \sum_{\substack{a(k)<\alpha < a(k+1) \\ w(\alpha)=j+m}} xq^{\alpha} \left( (-x)^{m-1} {j+m-2 \brack m-2}_{q^N} + (-x)^{m} {j+m-1 \brack m-1}_{q^N} \right) \right)
\\ &\times \prod_{h=1}^{j-1} \left(1-xq^{hN}\right) f_{a(1)}\left(xq^{jN}\right)
\\+&\sum_{j=2}^{k}\left( \sum_{m=0}^{k-j} d^m \sum_{\substack{a(k)<\alpha < a(k+1) \\ w(\alpha)=j+m}} xq^{\alpha} \left( (-xq^N)^{m-1} {j+m-2 \brack m-1}_{q^N} + (-xq^N)^m {j+m-1 \brack m}_{q^N} \right) \right)
\\&\times \prod_{h=1}^{j-1} \left(1-xq^{hN}\right) f_{a(1)}\left(xq^{jN}\right)
\\+& \left( xq^{a(k)}  + \sum_{m=1}^{k-1} d^m \sum_{\substack{a(k) <\alpha' < a(k+1) \\ w(\alpha')=m+1}}  xq^{\alpha'} \left((-xq^N)^{m-1} + (-xq^N)^m\right) \right) f_{a(1)}\left(xq^N\right)
\end{align*}
\begin{align*}
=& \sum_{j=1}^{k-1} \left( \sum_{m=0}^{k-j-1} d^m \sum_{\substack{\alpha < a(k) \\ w(\alpha)=j+m}} xq^{\alpha} \left( (-x)^{m-1} {j+m-1 \brack m-1}_{q^N} + (-x)^m {j+m \brack m}_{q^N} \right) \right)
\\ &\times \prod_{h=1}^{j-1} \left(1-xq^{hN}\right) f_{a(1)}\left(xq^{jN}\right)
\\+& \sum_{j=1}^{k-1} \left( \sum_{m=1}^{k-j} d^{m} \sum_{\substack{a(k)<\alpha < a(k+1) \\ w(\alpha)=j+m}} xq^{\alpha} \left( (-x)^{m-1} {j+m-2 \brack m-2}_{q^N} + (-x)^{m} {j+m-1 \brack m-1}_{q^N} \right) \right)
\\ &\times \prod_{h=1}^{j-1} \left(1-xq^{hN}\right) f_{a(1)}\left(xq^{jN}\right)
\\+&\sum_{j=1}^{k}\left( \sum_{m=0}^{k-j} d^m \sum_{\substack{a(k) \leq \alpha < a(k+1) \\ w(\alpha)=j+m}} xq^{\alpha} \left( (-xq^N)^{m-1} {j+m-2 \brack m-1}_{q^N} + (-xq^N)^m {j+m-1 \brack m}_{q^N} \right) \right)
\\&\times \prod_{h=1}^{j-1} \left(1-xq^{hN}\right) f_{a(1)}\left(xq^{jN}\right)
\end{align*}
\begin{align*}
= \sum_{j=1}^{k-1} \left[ \sum_{\substack{\alpha < a(k+1) \\ w(\alpha)=j}}\right. & xq^{\alpha}
\\+ \sum_{m=1}^{k-j} d^m  &\left( \sum_{\substack{\alpha < a(k) \\ w(\alpha)=j+m}} xq^{\alpha} \left( (-x)^{m-1} {j+m-1 \brack m-1}_{q^N} + (-x)^m {j+m \brack m}_{q^N} \right) \right.
\\&+ \sum_{\substack{a(k)<\alpha < a(k+1) \\ w(\alpha)=j+m}} xq^{\alpha} \left[ (-x)^{m-1} \left({j+m-2 \brack m-2}_{q^N}+ q^{N(m-1)}{j+m-2 \brack m-1}_{q^N} \right) \right.
\\&\; \; \; \; \; \; \; \; \; \; \; \; \; \; \; \; \; \; \; \; \; \; \; \; \; \; \; \; \; \; + \left.\left.\left. (-x)^{m} \left({j+m-1 \brack m-1}_{q^N} + q^{Nm} {j+m-1 \brack m}_{q^N} \right) \right] \vphantom{ \sum_{\substack{\alpha < a(k+1) \\ w(\alpha)=j}}} \right) \right]
\\\times \prod_{h=1}^{j-1} \big(1-&xq^{hN}\big) f_{a(1)}\left(xq^{jN}\right)
\\+ xq^{a(1)+ \cdots +a(k)} &\prod_{h=1}^{k-1} \left(1-xq^{hN}\right) f_{a(1)}\left(xq^{kN}\right).
\end{align*}
Thus by~\eqref{pascal1} of Proposition~\ref{pascal}, we obtain
\begin{align*}
\prod_{j=1}^{k} &\left(1-dxq^{a(j)}\right) f_{a(1)}(x) - f_{a(k+1)}(x)
\\=&\sum_{j=1}^{k-1} \left( \sum_{m=0}^{k-j} d^m   \sum_{\substack{\alpha < a(k+1) \\ w(\alpha)=j+m}} xq^{\alpha} \left( (-x)^{m-1} {j+m-1 \brack m-1}_{q^N} + (-x)^m {j+m \brack m}_{q^N} \right) \right)
\\ &\times \prod_{h=1}^{j-1} \big(1-xq^{hN}\big) f_{a(1)}\left(xq^{jN}\right)
\\+& xq^{a(1)+\cdots+a(k)} \prod_{h=1}^{k-1} \left(1-xq^{hN}\right) f_{a(1)}\left(xq^{kN}\right)
\\=&\sum_{j=1}^{k} \left( \sum_{m=0}^{k-j} d^m   \sum_{\substack{\alpha < a(k+1) \\ w(\alpha)=j+m}} xq^{\alpha} \left( (-x)^{m-1} {j+m-1 \brack m-1}_{q^N} + (-x)^m {j+m \brack m}_{q^N} \right) \right)
\\ &\times \prod_{h=1}^{j-1} \big(1-xq^{hN}\big) f_{a(1)}\left(xq^{jN}\right)
\\=&s_{k+1}(x).
\end{align*}
This completes the proof.
\end{proof}

Now, by setting $k=r+1$ in Lemma~\ref{conj}, and using~\eqref{eqf2}, we obtain the desired $q$-difference equation.
\begin{equation}
\label{qdiff}
\tag{$\mathrm{eq}_{N,r}$}
\begin{aligned}
\prod_{j=1}^{r} &\left(1-dxq^{a(j)}\right) f_{a(1)}(x) = f_{a(1)}(xq^N) 
\\ + \sum_{j=1}^{r} &\left( \sum_{m=0}^{r-j} d^m \sum_{\substack{\alpha < a(r+1) \\ w(\alpha)=j+m}} xq^{\alpha} \left( (-x)^{m-1} {j+m-1 \brack m-1}_{q^N} + (-x)^m {j+m \brack m}_{q^N} \right) \right)
\\ &\times \prod_{h=1}^{j-1} \left(1-xq^{hN}\right) f_{a(1)}\left(xq^{jN}\right).
\end{aligned}
\end{equation}

We now need to evaluate $f_{a(1)}(1)$, which we recall is the generating function for the overpartitions with difference conditions counted by $E(A'_N;k,n)$.

\section{Evaluating $f_{a(1)}(1)$ by induction}

In this section, we evaluate $f_{a(1)}(1)$. To do so, we prove by induction on $r$ the following theorem. A similar idea was already used in the proof of Theorem~\ref{dousse7} in~\cite{Dousse}, which used the proof of Theorem~\ref{dousse}. But the technical details of the following proof are much more intricate.

\begin{theorem}
\label{main}
Let $r$ be a positive integer. Then for every $N \geq \alpha(2^r-1)$, for every function $f$ satisfying $(\mathrm{eq}_{N,r})$ and the initial condition $f(0)=1$, we have
$$f(1)= \prod_{k=1}^r \frac{(-q^{a(k)};q^N)_{\infty}}{(dq^{a(k)};q^N)_{\infty}}.$$
\end{theorem}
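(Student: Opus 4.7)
The plan is to induct on $r$, exploiting the uniqueness of power-series solutions to $(\mathrm{eq}_{N,r})$ with prescribed value at $x=0$. Writing $f(x)=\sum_{n\geq 0}b_n x^n$ and extracting the coefficient of $x^n$ from $(\mathrm{eq}_{N,r})$ yields, for each $n\geq 1$, a relation of the shape $(1-q^{nN})\,b_n=P_n(b_0,\ldots,b_{n-1})$: the $f(xq^N)$ term on the right contributes $q^{nN}b_n$, while every other $f(xq^{jN})$ is multiplied by a factor $xq^{\alpha}$ with $\alpha>0$ and so only involves $b_m$ for $m<n$. Since $1-q^{nN}\neq 0$ for $n\geq 1$ and $|q|<1$, the initial condition $b_0=1$ determines $f$ uniquely. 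It therefore suffices to show that the natural candidate $F_r(x):=\prod_{k=1}^{r}\frac{(-xq^{a(k)};q^N)_\infty}{(dxq^{a(k)};q^N)_\infty}$ satisfies $(\mathrm{eq}_{N,r})$.

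The base case $r=1$ is immediate: the only $\alpha<a(2)=N+a(1)$ with $w(\alpha)=1$ is $a(1)$, and the $q$-binomial coefficient with lower index $-1$ appearing for $m=0$ vanishes by convention, so $(\mathrm{eq}_{N,1})$ reduces to
\[
(1-dxq^{a(1)})\,f(x)=(1+xq^{a(1)})\,f(xq^N).
\]
Iterating this with $f(0)=1$ and $|q|<1$ gives $f(x)=\frac{(-xq^{a(1)};q^N)_\infty}{(dxq^{a(1)};q^N)_\infty}=F_1(x)$, which at $x=1$ is the claimed product.

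For the inductive step, I would fix $r\geq 2$, apply the inductive hypothesis to the admissible subset $\{a(1),\dots,a(r-1)\}$ (whose distinct-subset-sums condition is inherited, and which satisfies $N\geq a(1)+\cdots+a(r-1)$), and introduce the auxiliary function
\[
g(x):=\frac{(dxq^{a(r)};q^N)_\infty}{(-xq^{a(r)};q^N)_\infty}\,f(x),
\]
so that $g(0)=1$. The plan is to prove that $g$ satisfies $(\mathrm{eq}_{N,r-1})$; the induction hypothesis then forces $g(1)=\prod_{k=1}^{r-1}\frac{(-q^{a(k)};q^N)_\infty}{(dq^{a(k)};q^N)_\infty}$, and multiplying back by $\frac{(-q^{a(r)};q^N)_\infty}{(dq^{a(r)};q^N)_\infty}$ yields the desired value of $f(1)$.

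The hard part will be verifying that $g$ satisfies $(\mathrm{eq}_{N,r-1})$. After substituting $f=\frac{(-xq^{a(r)};q^N)_\infty}{(dxq^{a(r)};q^N)_\infty}\,g$ into $(\mathrm{eq}_{N,r})$, the factor $(1-dxq^{a(r)})$ on the left is absorbed into the product defining $g$, and each shifted value $f(xq^{jN})$ on the right contributes $g(xq^{jN})$ times a ratio $\prod_{m=0}^{j-1}\frac{1-dxq^{a(r)+mN}}{1+xq^{a(r)+mN}}$. The sum over $\alpha<a(r+1)$ then splits into contributions from $\alpha<a(r)$ and from $\alpha=a(r)+\alpha'$ with $\alpha'<a(r)$, mirroring the bookkeeping used to derive Lemma~\ref{conj}. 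Reassembling these pieces into the shape of $(\mathrm{eq}_{N,r-1})$ will require repeated use of the Pascal-type identities of Proposition~\ref{pascal}, together with careful tracking of $w(\alpha)$ and $v(\alpha)$ as $\alpha$ ranges over $A'$; this intricate reshuffling of $q$-binomial sums is where the technical difficulty of the argument is concentrated.
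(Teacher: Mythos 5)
Your uniqueness observation and your base case are both correct (and the base case matches the paper's). But the inductive step rests on a false premise: the product $F_r(x)=\prod_{k=1}^{r}\frac{(-xq^{a(k)};q^N)_\infty}{(dxq^{a(k)};q^N)_\infty}$ does \emph{not} satisfy $(\mathrm{eq}_{N,r})$ for $r\geq 2$, and equivalently your $g(x)=\frac{(dxq^{a(r)};q^N)_\infty}{(-xq^{a(r)};q^N)_\infty}f(x)$ does not satisfy $(\mathrm{eq}_{N,r-1})$. Take the Schur case $N=3$, $a(1)=1$, $a(2)=2$, $d=0$, where $(\mathrm{eq}_{3,2})$ reads
\begin{equation*}
f(x)=(1+xq+xq^2)f(xq^3)+xq^3(1-xq^3)f(xq^6).
\end{equation*}
If $f(x)=(-xq;q^3)_\infty(-xq^2;q^3)_\infty$ were a solution, then since $f(x)=(1+xq)(1+xq^2)f(xq^3)$ the equation would force $x^2q^3f(xq^3)=xq^3(1-xq^3)f(xq^6)$, i.e.\ $x(1+xq^4)(1+xq^5)=1-xq^3$, which is absurd. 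Combined with your (correct) uniqueness argument, this shows your candidate is simply not the solution. The underlying combinatorial reason is that Theorem~\ref{dousse} does not refine by the number of parts: e.g.\ $n=3$ has one partition into distinct parts $\equiv 1,2\pmod 3$, namely $1+2$ (two parts), and one Schur-type partition, namely $3$ (one part). So the generating function $f_{a(1)}(x)$, which tracks the number of parts via $x$, cannot equal a product that would force a part-by-part refinement; only the evaluation at $x=1$ factors.

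This is exactly why the paper's proof is so much more involved than clearing one factor of the product. The paper's substitution is $F(x)=f(x)\prod_{n\geq 0}\frac{1-dxq^{Nn+a(r)}}{1-xq^{Nn}}$ (note the denominator $(x;q^N)_\infty$, absent from your $g$), followed by a passage to coefficient recurrences, a nontrivial identification of two different recurrences (Lemma~\ref{equalAA'}, which is where the $q$-binomial reshuffling you anticipate actually lives), and a return to a function $g$ satisfying $(\mathrm{eq}_{N,r-1})$. Crucially, because the intermediate functions have a pole at $x=1$ coming from $1/(x;q^N)_\infty$, the comparison of $f$ and $g$ is made only through $\lim_{x\to 1^-}(1-x)F(x)=\lim_n A_n$ via Appell's comparison theorem; this is the mechanism that yields an identity at $x=1$ without asserting any identity of functions of $x$. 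Your outline has no analogue of this limiting step, and without it the induction cannot close. To repair the argument you would need to replace your $g$ by the paper's chain of transformations (or something equivalent) rather than by division by a single $q$-product.
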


The idea of the proof is to start from a function satisfying $(\mathrm{eq}_{N,r})$ and to do some transformations to relate it to a function satisfying $(\mathrm{eq}_{N,r-1})$ in order to use the induction hypothesis.
In order to simplify the proof, we split it into several lemmas.

\begin{lemma}
\label{lemmaF}
Let $f$ and $F$ be two functions such that
$$F(x):= f(x) \prod_{n=0}^{\infty} \frac{1-dxq^{Nn+a(r)}}{1-xq^{Nn}}.$$
Then $f(0)=1$ and $f$ satisfies~\eqref{qdiff} if and only if $F(0)=1$ and $F$ satisfies the following $q$-difference equation
\begin{equation}
\label{qdiffF}
\tag{$\mathrm{eq}'_{N,r}$}
\begin{aligned}
&\left(1 + \sum_{j=1}^{r} \left( d^{j-1} \sum_{\substack{ \alpha < a(r) \\ w(\alpha)=j-1}} q^{\alpha} +d^j \sum_{\substack{ \alpha < a(r) \\ w(\alpha)=j}} q^{\alpha} \right) (-x)^j \right) F(x)
\\&= F\left(xq^N\right) + \sum_{j=1}^r \sum_{l=1}^r \sum_{k=0}^{\min(j-1,l-1)}c_{k,j}b_{l-k,j} (-1)^{l-1}x^l F\left(xq^{jN}\right),
\end{aligned}
\end{equation}
where
$$c_{k,j}:= q^{N \frac{k(k+1)}{2} + k a(r)} {j-1 \brack k}_{q^N} d^k,$$
and
$$b_{m,j}:= \left( d^{m-1} \sum_{\substack{ \alpha < a(r+1) \\ w(\alpha)=j+m-1}} q^{\alpha} +d^m \sum_{\substack{ \alpha < a(r+1) \\ w(\alpha)=j+m}} q^{\alpha} \right) {j+m-1 \brack m-1}_{q^N}.$$
\end{lemma}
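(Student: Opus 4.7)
The plan is to substitute $f(x) = F(x)/H(x)$, where
$$H(x):=\prod_{n\geq 0}\frac{1-dxq^{Nn+a(r)}}{1-xq^{Nn}},$$
directly into $(\mathrm{eq}_{N,r})$ and simplify. Since $H(0)=1$ and $H$ is invertible as a formal power series in $x$, the conditions $f(0)=1$ and $F(0)=1$ are equivalent and every manipulation below is reversible, so it suffices to verify one direction.

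First I would record the telescoping identity
$$\frac{H(xq^{jN})}{H(x)} = \frac{\prod_{n=0}^{j-1}(1-xq^{nN})}{\prod_{n=0}^{j-1}(1-dxq^{Nn+a(r)})}.$$
Substituting $f(xq^{jN}) = F(xq^{jN})/H(xq^{jN})$ into $(\mathrm{eq}_{N,r})$ and multiplying through by $H(x)$, the identity $\prod_{n=0}^{j-1}(1-xq^{nN}) = (1-x)\prod_{h=1}^{j-1}(1-xq^{hN})$ cancels the explicit difference-product appearing on the right of $(\mathrm{eq}_{N,r})$ up to a global factor of $1/(1-x)$. Clearing this $(1-x)$ and dividing out $(1-dxq^{a(r)})$, which is a factor common to the left (as one factor of $\prod_{j=1}^r(1-dxq^{a(j)})$) and to every summand on the right (as the $n=0$ factor of $\prod_{n=0}^{j-1}(1-dxq^{Nn+a(r)})$), yields
$$(1-x)\prod_{j=1}^{r-1}(1-dxq^{a(j)}) F(x) = F(xq^N) + \sum_{j=1}^{r} B_j(x)\prod_{n=1}^{j-1}(1-dxq^{Nn+a(r)})F(xq^{jN}),$$
where $B_j(x)$ denotes the bracketed coefficient of $f(xq^{jN})$ in $(\mathrm{eq}_{N,r})$. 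On the LHS I would expand
$$\prod_{j=1}^{r-1}(1-dxq^{a(j)}) = \sum_{m=0}^{r-1}(-dx)^m \sum_{\substack{w(\alpha)=m \\ \alpha<a(r)}} q^{\alpha},$$
distribute the $(1-x)$ factor, and combine the two resulting series by the relabellings $j=m$ and $j=m+1$; this produces exactly the coefficient of $F(x)$ appearing in $(\mathrm{eq}'_{N,r})$.

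On the RHS, the $q$-binomial theorem with base $q^N$ yields
$$\prod_{n=1}^{j-1}(1-dxq^{Nn+a(r)}) = \sum_{k=0}^{j-1} q^{Nk(k+1)/2+ka(r)}{j-1\brack k}_{q^N}(-dx)^k = \sum_{k=0}^{j-1}(-x)^k c_{k,j},$$
while the same combining trick (now pairing $(-x)^{m-1}$ with $(-x)^m$) rewrites $B_j(x)$ as $\sum_{m=1}^{r-j+1}(-1)^{m-1}x^m b_{m,j}$. Multiplying these two polynomials in $x$ and collecting the coefficient of $x^l F(xq^{jN})$ with $l=k+m$ (so $0\leq k\leq\min(j-1,l-1)$ and the signs combine to $(-1)^{l-1}$) produces precisely the summand $(-1)^{l-1}\sum_{k=0}^{\min(j-1,l-1)} c_{k,j}b_{l-k,j}$ of $(\mathrm{eq}'_{N,r})$, with $l$ ranging from $1$ to $r$ as required. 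The main obstacle is the index bookkeeping in this re-expansion, in particular verifying that the boundary term $m=r-j+1$ really survives (so that $l$ reaches $r$): here $b_{r-j+1,j}$ reduces to $d^{r-j}q^{a(1)+\cdots+a(r)}{r\brack r-j}_{q^N}$, which is nonzero because the sum $\sum_{w(\alpha)=r,\,\alpha<a(r+1)}q^{\alpha}$ collapses to the single term $q^{a(1)+\cdots+a(r)}$; beyond that, each step is a straightforward if lengthy algebraic check.
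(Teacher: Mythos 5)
Your proposal is correct and follows essentially the same route as the paper's proof: substitute $f = F/H$ into $(\mathrm{eq}_{N,r})$, telescope the ratio $H(x)/H(xq^{jN})$, cancel the common factors $(1-x)$ and $(1-dxq^{a(r)})$, expand the remaining product via the $q$-binomial theorem to produce the $c_{k,j}$, and re-index the bracketed coefficient to produce the $b_{m,j}$, with invertibility of $H$ giving the equivalence in both directions. Your index bookkeeping (including the observation that $b_{m,j}$ vanishes once $j+m-1>r$, which justifies the stated range $1\le l\le r$ and the lower limit $k=0$) matches the paper's.
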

\begin{proof}
Directly plugging the definition of $f$ into $(\mathrm{eq}_{N,r})$, we get
\begin{align*}
(1-x) \prod_{j=1}^{r-1} &\left(1-dxq^{a(j)}\right) F(x) = F(xq^N) 
\\ + \sum_{j=1}^{r} &\left( \sum_{m=0}^{r-j} d^m \sum_{\substack{\alpha < a(r+1) \\ w(\alpha)=j+m}} xq^{\alpha} \left( (-x)^{m-1} {j+m-1 \brack m-1}_{q^N} + (-x)^m {j+m \brack m}_{q^N} \right) \right)
\\ &\times \prod_{h=1}^{j-1} \left(1-dxq^{hN+a(r)}\right) F\left(xq^{jN}\right).
\end{align*}
With the conventions that
$$ \sum_{\substack{\alpha < a(r) \\ w(\alpha)=n}} q^{\alpha} = 0 \ \text{for}\ n \geq r,$$
and
$$ \sum_{\substack{\alpha < a(r) \\ w(\alpha)=0}} q^{\alpha} = 1,$$
this can be reformulated as
\begin{align*}
&\left(1 + \sum_{j=1}^{r} \left( d^{j-1} \sum_{\substack{ \alpha < a(r) \\ w(\alpha)=j-1}} q^{\alpha} +d^j \sum_{\substack{ \alpha < a(r) \\ w(\alpha)=j}} q^{\alpha} \right) (-x)^j \right) F(x) = F(xq^N) 
\\ &+ \sum_{j=1}^{r} \left( \sum_{m=1}^{r-j+1} \left(d^{m-1} \sum_{\substack{\alpha < a(r+1) \\ w(\alpha)=j+m-1}} q^{\alpha} + d^m \sum_{\substack{\alpha < a(r+1) \\ w(\alpha)=j+m}} q^{\alpha}\right) {j+m-1 \brack m-1}_{q^N} (-1)^{m-1} x^m \right)
\\ &\times \left(\sum_{k=0}^{j-1} q^{N \frac{k(k-1)}{2}+k a(r)} {j-1 \brack k}_{q^N} d^k (-x)^k\right) F\left(xq^{jN}\right),
\end{align*}
because of the $q$-binomial theorem~\cite{Gasper}
\begin{equation}
\label{identity}
\prod_{k=0}^{n-1}(1+q^kt) = \sum_{k=0}^n q^{\frac{k(k-1)}{2}} {n \brack k}_q t^k,
\end{equation}
in which we replace $q$ by $q^N$, $n$ by $j-1$ and $t$ by $-dxq^{N+a(r)}$.
Finally, noting that $b_{l-k,j} = 0$ if $j+l-k-1 \geq r$, we can rewrite this as~\eqref{qdiffF}.
Moreover, $F(0)=f(0)=1$ and the lemma is proved.
\end{proof}

We can directly transform ~\eqref{qdiffF} into a recurrence equation on the coefficients of the generating function $F$.
\begin{lemma}
\label{lemmaA}
Let $F$ be a function and $(A_n)_{n \in \N}$ a sequence such that $$F(x) =: \sum_{n=0}^{\infty} A_n x^n.$$
Then $F$ satisfies $(\mathrm{eq}'_{N,r})$ and the initial condition $F(0)=1$ if and only if $A_0=1$ and $(A_n)_{n \in \N}$ satisfies the following recurrence equation
\begin{equation}
\label{recA}
\tag{$\mathrm{rec}_{N,r}$}
\begin{aligned}
& \left(1-q^{nN}\right) A_n =
\\ & \sum_{m=1}^{r} \left( d^{m-1} \sum_{\substack{\alpha < a(r) \\ w(\alpha)=m-1}} q^{\alpha} + d^{m} \sum_{\substack{\alpha < a(r) \\ w(\alpha)=m}} q^{\alpha} + \sum_{j=1}^r \sum_{k=0}^{\min(j-1,m-1)} c_{k,j} b_{m-k,j}q^{jN(n-m)} \right) (-1)^{m+1} A_{n-m}.
\end{aligned}
\end{equation}
\end{lemma}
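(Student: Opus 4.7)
The plan is to prove the equivalence by direct coefficient extraction. Since both sides of $(\mathrm{eq}'_{N,r})$ are power series in $x$ with coefficients depending on $d$ and $q$, the functional identity $(\mathrm{eq}'_{N,r})$ holds if and only if the coefficients of $x^n$ agree for every $n \geq 0$. Writing $F(x) = \sum_{n \geq 0} A_n x^n$, the initial condition $F(0)=1$ is clearly equivalent to $A_0=1$, so it suffices to show the $n\geq 1$ coefficient equations are exactly $(\mathrm{rec}_{N,r})$.

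First I would substitute $F(x)=\sum_n A_n x^n$ and $F(xq^{jN})=\sum_n A_n q^{jNn} x^n$ into both sides of $(\mathrm{eq}'_{N,r})$. On the left-hand side, multiplication by $(-x)^j$ shifts the index and contributes a factor $(-1)^j$, so the coefficient of $x^n$ is
\[
A_n + \sum_{j=1}^{r} \left( d^{j-1} \sum_{\substack{\alpha < a(r) \\ w(\alpha)=j-1}} q^{\alpha} + d^{j} \sum_{\substack{\alpha < a(r) \\ w(\alpha)=j}} q^{\alpha} \right) (-1)^j A_{n-j}.
\]
On the right-hand side, the term $F(xq^N)$ contributes $q^{nN} A_n$, and for each $j,l$ the summand $c_{k,j} b_{l-k,j} (-1)^{l-1} x^l F(xq^{jN})$ contributes $c_{k,j} b_{l-k,j}(-1)^{l-1} q^{jN(n-l)} A_{n-l}$ to the coefficient of $x^n$.

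Next I would move $q^{nN} A_n$ to the left to obtain the factor $(1-q^{nN}) A_n$, move the convolution coming from the left-hand side to the right (which flips the sign from $(-1)^j$ to $(-1)^{j+1}$), and finally rename the summation index to $m$ throughout. Grouping together the term arising from the left-hand side (corresponding to $c_{0,j}$-like contributions without the $q^{jN(n-m)}$ factor) and the combined sums over $(j,k)$ yields exactly the bracketed expression in $(\mathrm{rec}_{N,r})$. Since each step is a reversible coefficient identification, the converse direction follows automatically.

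The proof is essentially bookkeeping, and the main obstacle is purely notational: one must carefully track the sign $(-1)^{m+1}$ after moving terms across the equality, verify that the shifted indices line up (in particular that the inner constraint $k \leq \min(j-1, l-1)$ survives the relabeling $l \mapsto m$), and confirm that the isolated contributions of the $F(x)$ expansion on the left are exactly the $k=0$ case absorbed into the first two summands of the bracket in $(\mathrm{rec}_{N,r})$. No deep argument is needed beyond comparing coefficients term by term.
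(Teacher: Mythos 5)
Your proposal is correct and follows essentially the same route as the paper: expand $F$ and each $F(xq^{jN})$ as power series, equate coefficients of $x^n$, move the $q^{nN}A_n$ and the left-hand convolution terms across the equality (flipping the sign to $(-1)^{m+1}$), and relabel the summation indices to obtain $(\mathrm{rec}_{N,r})$, with $F(0)=1 \Leftrightarrow A_0=1$ and reversibility giving the equivalence. The only quibble is your parenthetical describing the left-hand polynomial contributions as ``$c_{0,j}$-like'' or a ``$k=0$ case'': in fact they simply become the first two summands of the bracket verbatim, separate from the triple sum, but this does not affect the argument.
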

\begin{proof}
By the definition of $(A_n)_{n \in \N}$ and~\eqref{qdiffF}, we have
\begin{align*}
\left(1-q^{nN}\right) A_n =& \sum_{j=1}^{r} \left( d^{j-1} \sum_{\substack{\alpha < a(r) \\ w(\alpha)=j-1}} q^{\alpha} + d^{j} \sum_{\substack{\alpha < a(r) \\ w(\alpha)=j}} q^{\alpha} \right) (-1)^{j+1} A_{n-j}
\\ &+  \sum_{j=1}^r \sum_{l=1}^r \sum_{k=0}^{\min(j-1,l-1)} c_{k,j} b_{l-k,j}q^{jN(n-l)} (-1)^{l+1} A_{n-l}.
\end{align*}
Relabelling the summation indices and factorising leads to~\eqref{recA}. Moreover, $A_n = F(0)=1$. This completes the proof.
\end{proof}

Let us now do some transformations starting from $(\mathrm{eq}_{N,r-1})$.

\begin{lemma}
\label{lemmaG}
Let $g$ and $G$ be two functions such that
$$G(x):= g(x) \prod_{n=0}^{\infty} \frac{1}{1-xq^{Nn}}.$$
Then $g$ satisfies $(\mathrm{eq}_{N,r-1})$ and the initial condition $g(0)=1$ if and only if $G(0)=1$ and $G$ satisfies the following $q$-difference equation
\begin{equation}
\label{qdiffG}
\tag{$\mathrm{eq}''_{N,r-1}$}
\begin{aligned}
&\left(1 + \sum_{j=1}^{r} \left( d^{j-1} \sum_{\substack{ \alpha < a(r) \\ w(\alpha)=j-1}} q^{\alpha} +d^j \sum_{\substack{ \alpha < a(r) \\ w(\alpha)=j}} q^{\alpha} \right) (-x)^j \right) G(x) = G\left(xq^N\right) 
\\&+ \sum_{j=1}^r \sum_{m=1}^{r-j} \left(d^{m-1} \sum_{\substack{ \alpha < a(r) \\ w(\alpha)=j+m-1}} q^{\alpha} +d^m \sum_{\substack{ \alpha < a(r) \\ w(\alpha)=j+m}} q^{\alpha} \right) {j+m-1 \brack m-1}_{q^N} (-1)^{m+1} x^m  G\left(xq^{jN}\right).
\end{aligned}
\end{equation}
\end{lemma}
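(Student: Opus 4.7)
The plan follows the template of the proof of Lemma~\ref{lemmaF}, but is noticeably simpler here because the transformation involves only the factor $\prod_{n=0}^{\infty}(1-xq^{Nn})$, without a mixed factor $\prod_{n=0}^{\infty}(1-dxq^{Nn+a(r)})$, so no appeal to the $q$-binomial theorem is needed. I substitute $g(x)=G(x)\prod_{n=0}^{\infty}(1-xq^{Nn})$ directly into $(\mathrm{eq}_{N,r-1})$. The key observation is the identity $\prod_{h=1}^{j-1}(1-xq^{hN})\cdot\prod_{n=j}^{\infty}(1-xq^{nN})=\prod_{n=1}^{\infty}(1-xq^{nN})$, which makes $\prod_{n=1}^{\infty}(1-xq^{nN})$ a common factor of every term on both sides of the substituted equation. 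After cancelling it, the LHS retains a single extra factor $(1-x)$ (coming from the $n=0$ term applied to $g(x)$), while on the RHS each $G(xq^{jN})$ appears with no infinite-product factor.

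Writing $T_j:=\sum_{w(\alpha)=j,\,\alpha<a(r)}q^{\alpha}$ (with $T_0=1$ and $T_j=0$ for $j\geq r$), the super-increasing hypothesis on $A$ gives $\prod_{j=1}^{r-1}(1-dxq^{a(j)})=\sum_{j=0}^{r-1}(-dx)^{j}T_{j}$, since the subsets of $\{a(1),\ldots,a(r-1)\}$ are in bijection with the $\alpha\in A'$ satisfying $\alpha<a(r)$. Multiplying by $(1-x)$ and regrouping powers of $x$ yields exactly the bracketed prefactor $1+\sum_{j=1}^{r}(-x)^{j}(d^{j-1}T_{j-1}+d^{j}T_{j})$ of $G(x)$ in $(\mathrm{eq}''_{N,r-1})$; the $j=r$ contribution survives via $T_{r-1}=q^{a(1)+\cdots+a(r-1)}$ while the unwanted $d^{r}T_{r}$ vanishes by convention.

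For the right-hand side I reindex the bracketed sum in each $j$-term of $(\mathrm{eq}_{N,r-1})$: splitting it into the two pieces carrying ${j+m-1 \brack m-1}_{q^N}$ and ${j+m \brack m}_{q^N}$ respectively, and shifting $m\mapsto m-1$ in the second piece, the $q$-binomials align and the pair collapses to $(-1)^{m+1}x^{m}{j+m-1 \brack m-1}_{q^N}(d^{m-1}T_{j+m-1}+d^{m}T_{j+m})$. Extending the outer range to $j=r$ is harmless because the inner $m$-sum is then empty, so what remains is exactly the RHS of $(\mathrm{eq}''_{N,r-1})$. The initial condition $G(0)=g(0)=1$ is immediate, and every manipulation is reversible, yielding the equivalence.

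The main obstacle is purely bookkeeping: tracking the two pieces through the index shift, invoking the vanishing convention $T_j=0$ for $j\geq r$ to absorb boundary terms on both the LHS prefactor and the inner $m$-sum, and verifying that the extended $j$- and $m$-ranges introduce no spurious contributions. There is no analytic or combinatorial subtlety beyond what already appears in Lemma~\ref{lemmaF}.
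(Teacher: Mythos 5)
Your proposal is correct and follows essentially the same route as the paper: substitute the definition of $G$ into $(\mathrm{eq}_{N,r-1})$, cancel the common factor $\prod_{n=1}^{\infty}(1-xq^{nN})$ (the paper's displayed intermediate equation is exactly your post-cancellation identity), expand $(1-x)\prod_{j=1}^{r-1}(1-dxq^{a(j)})$ over subsets of $\{a(1),\dots,a(r-1)\}$, and reindex the two $q$-binomial pieces with the vanishing conventions absorbing the boundary terms. Your write-up actually supplies more detail than the paper, which compresses the reindexing step into ``as in the proof of Lemma~\ref{lemmaF}''; your observation that the $q$-binomial theorem is not needed here is accurate.
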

\begin{proof}
Using the definition of $G$ and $(\mathrm{eq}_{N,r-1})$, we get
\begin{align*}
&(1-x) \prod_{j=1}^{r-1} \left(1-dxq^{a(j)}\right) G(x) = G(xq^N) 
\\&+ \sum_{j=1}^{r-1} \left( \sum_{m=0}^{r-j-1} d^m \sum_{\substack{\alpha < a(r) \\ w(\alpha)=j+m}} xq^{\alpha} \left( (-x)^{m-1} {j+m-1 \brack m-1}_{q^N} + (-x)^m {j+m \brack m}_{q^N} \right) \right) G\left(xq^{jN}\right).
\end{align*}
Then, as in the proof of Lemma~\ref{lemmaF}, this can be reformulated as~\eqref{qdiffG}, and $G(0)= g(0)=1.$
\end{proof}

Again, let us translate this into a recurrence equation on the coefficients of the generating function $G$.

\begin{lemma}
\label{lemmaa}
Let $G$ be a function and $(a_n)_{n \in \N}$ be a sequence such that
$$G(x) =: \sum_{n=0}^{\infty} a_n x^n.$$
Then $G$ satisfies $(\mathrm{eq}''_{N,r-1})$ and the initial condition $G(0)=1$ if and only if $a_0=1$ and $(a_n)_{n \in \N}$ satisfies the following recurrence equation
\begin{equation}
\label{reca}
\tag{$\mathrm{rec''}_{N,r-1}$}
\begin{aligned}
& \left(1-q^{nN}\right) a_n =
\\ & \sum_{m=1}^{r} \sum_{j=0}^{r-1} \left(d^{m-1} \sum_{\substack{\alpha < a(r) \\ w(\alpha)=j+m-1}} q^{\alpha} + d^{m} \sum_{\substack{\alpha < a(r) \\ w(\alpha)=j+m}} q^{\alpha} \right) {j+m-1 \brack m-1}_{q^N} q^{jN(n-m)} (-1)^{m+1} a_{n-m}.
\end{aligned}
\end{equation}
\end{lemma}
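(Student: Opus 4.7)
The plan is to mirror the proof of Lemma~\ref{lemmaA}: substitute the power series $G(x)=\sum_{n\geq 0} a_n x^n$ into $(\mathrm{eq}''_{N,r-1})$ and equate coefficients of $x^n$. The initial condition $a_0 = G(0) = 1$ is immediate, and since every step of the coefficient comparison is an equivalence of formal power series, the ``if and only if'' will follow automatically in both directions.

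First I expand each $G(xq^{jN}) = \sum_{n \geq 0} a_n q^{jNn} x^n$ and extract $[x^n]$. The left-hand side of $(\mathrm{eq}''_{N,r-1})$ yields
\[
a_n + \sum_{j=1}^{r} \left( d^{j-1} \sum_{\substack{\alpha<a(r)\\w(\alpha)=j-1}} q^{\alpha} + d^{j} \sum_{\substack{\alpha<a(r)\\w(\alpha)=j}} q^{\alpha}\right) (-1)^j a_{n-j},
\]
while the right-hand side yields $q^{Nn} a_n$ together with a double sum whose $(j,m)$-term is
\[
\left(d^{m-1}\sum_{\substack{\alpha<a(r)\\w(\alpha)=j+m-1}}q^{\alpha}+d^m\sum_{\substack{\alpha<a(r)\\w(\alpha)=j+m}}q^{\alpha}\right) {j+m-1 \brack m-1}_{q^N} (-1)^{m+1} q^{jN(n-m)}\, a_{n-m}.
\]
Moving $q^{Nn}a_n$ to the left collects the prefactor $(1-q^{Nn})a_n$, as desired.

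It remains to match the resulting identity with $(\mathrm{rec''}_{N,r-1})$. The sum coming from the LHS prefactor is transferred to the right by flipping signs ($-(-1)^j = (-1)^{j+1}$); renaming $j \to m$ and using ${m-1 \brack m-1}_{q^N}=1$ together with $q^{0 \cdot N (n-m)}=1$, it becomes precisely the $j=0$ slice of the double sum in $(\mathrm{rec''}_{N,r-1})$. The double sum coming from the RHS provides the slices $1 \leq j \leq r-1$. The only subtlety is that $(\mathrm{rec''}_{N,r-1})$ lets $m$ run up to $r$ while $(\mathrm{eq}''_{N,r-1})$ caps $m$ at $r-j$; this discrepancy is harmless because $\alpha<a(r)$ forces $w(\alpha)\leq r-1$, so the inner sums vanish automatically for every $m>r-j$, by the same empty-sum convention already used in Lemma~\ref{lemmaF}.

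The main obstacle is therefore nothing beyond careful bookkeeping of indices, signs, and empty sums; no analytic or combinatorial content is required, and in particular no $q$-binomial identity needs to be invoked (contrast with Lemma~\ref{lemmaF}, where \eqref{identity} enters). The reverse implication, from $(\mathrm{rec''}_{N,r-1})$ back to $(\mathrm{eq}''_{N,r-1})$, is obtained simply by multiplying both sides of the recurrence by $x^n$ and summing over $n \geq 0$, each step being a formal power series identity, so the equivalence holds in both directions as claimed.
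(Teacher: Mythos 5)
Your proposal is correct and follows essentially the same route as the paper: extract the coefficient of $x^n$ from $(\mathrm{eq}''_{N,r-1})$, absorb the $G(xq^N)$ contribution into the factor $(1-q^{nN})$, identify the prefactor sum as the $j=0$ slice of the double sum, and extend the range of $m$ using the vanishing of the inner sums for $w(\alpha)\geq r$ with $\alpha<a(r)$. Nothing is missing.
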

\begin{proof}
By the definition of $(a_n)_{n \in \N}$ and~\eqref{qdiffG}, we have
\begin{align*}
& \left(1-q^{nN}\right) a_n = \sum_{m=1}^{r} \left( d^{m-1} \sum_{\substack{\alpha < a(r) \\ w(\alpha)=m-1}} q^{\alpha} + d^{m} \sum_{\substack{\alpha < a(r) \\ w(\alpha)=m}} q^{\alpha} \right) (-1)^{m+1} a_{n-m}
\\ &+ \sum_{m=1}^{r-1} \sum_{j=1}^{r-1}  \left(d^{m-1} \sum_{\substack{\alpha < a(r) \\ w(\alpha)=j+m-1}} q^{\alpha} + d^{m} \sum_{\substack{\alpha < a(r) \\ w(\alpha)=j+m}} q^{\alpha} \right) {j+m-1 \brack m-1}_{q^N} q^{jN(n-m)} (-1)^{m+1} a_{n-m}.
\end{align*}
As the summand of the second term equals $0$ when $m=r$, we can equivalently write that the second sum is taken on $m$ going from $1$ to $r$. Then we observe that the first term corresponds to $j=0$ in the second term, and factorising gives exactly~\eqref{recA}. Moreover, $a_n = G(0)=1$. This completes the proof.
\end{proof}

Let us do a final transformation and obtain yet another recurrence equation.

\begin{lemma}
\label{lemmaA'}
Let $(a_n)_{n \in \N}$ and $(A'_n)_{n \in \N}$ be two sequences such that
$$ A'_n := a_n \prod_{k=0}^{n-1} \left( 1 +q^{Nk+a(r)} \right).$$
Then $(a_n)_{n \in \N}$ satisfies $(\mathrm{rec}''_{N,r-1})$ and the initial condition $a_0=1$ if and only if $A'_0=1$ and $(A'_n)_{n \in \N}$ satisfies the following recurrence equation
\begin{equation}
\label{recA'}
\tag{$\mathrm{rec'}_{N,r-1}$}
\begin{aligned}
\left(1-q^{nN}\right) A'_n =& \sum_{m=1}^{r} \left( \sum_{\nu=0}^{r-1} \sum_{\mu=0}^{\min(m-1, \nu)} f_{m,\mu} e_{m,\nu - \mu} q^{\nu N (n-m)} \right.
\\&+ \left. q^{a(r)} \sum_{\nu=1}^{r} \sum_{\mu=0}^{\min(m-1, \nu-1)} f_{m,\mu} e_{m,\nu - \mu -1} q^{\nu N (n-m)}\right) (-1)^{m+1}  A'_{n-m},
\end{aligned}
\end{equation}
where
$$e_{m,j} := \left(d^{m-1} \sum_{\substack{\alpha < a(r) \\ w(\alpha)=j+m-1}} q^{\alpha} + d^{m} \sum_{\substack{\alpha < a(r) \\ w(\alpha)=j+m}} q^{\alpha} \right) {j+m-1 \brack m-1}_{q^N},$$
and
$$f_{m,k} := q^{N \frac{k(k+1)}{2} +ka(r)} {m-1 \brack k}_{q^N}.$$
\end{lemma}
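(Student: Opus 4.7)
The plan is to substitute the definition of $A'_n$ into both sides of $(\mathrm{rec}''_{N,r-1})$ and show that after expanding the resulting $q$-Pochhammer factor via the $q$-binomial theorem, we recover exactly $(\mathrm{rec}'_{N,r-1})$. Writing $R_n := \prod_{k=0}^{n-1}(1+q^{Nk+a(r)})$, we have $a_n = A'_n/R_n$, so multiplying $(\mathrm{rec}''_{N,r-1})$ by $R_n$ converts the left-hand side to $(1-q^{nN})A'_n$ and replaces $a_{n-m}$ on the right with $A'_{n-m} \cdot (R_n/R_{n-m})$, where
\[
\frac{R_n}{R_{n-m}} = \prod_{k=n-m}^{n-1}\bigl(1+q^{Nk+a(r)}\bigr) = \prod_{j=0}^{m-1}\bigl(1 + q^{Nj}\cdot q^{N(n-m)+a(r)}\bigr).
\]
The initial condition is immediate since $A'_0 = a_0$.

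Next, I apply the $q$-binomial theorem~\eqref{identity} with $q\mapsto q^N$, $n\mapsto m$, $t\mapsto q^{N(n-m)+a(r)}$ to get
\[
\frac{R_n}{R_{n-m}} = \sum_{k=0}^{m} q^{N\frac{k(k-1)}{2}} {m \brack k}_{q^N} q^{k a(r)} q^{kN(n-m)}.
\]
To match the definition $f_{m,k} = q^{Nk(k+1)/2 + ka(r)} {m-1 \brack k}_{q^N}$, which carries $\binom{m-1}{k}$ rather than $\binom{m}{k}$, I invoke Pascal's identity~\eqref{pascal1}, writing ${m \brack k}_{q^N} = q^{Nk}{m-1 \brack k}_{q^N} + {m-1 \brack k-1}_{q^N}$. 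The first piece directly yields $\sum_{\mu=0}^{m-1} f_{m,\mu} q^{\mu N(n-m)}$; for the second piece, the $k=0$ term vanishes and the shift $\mu = k-1$ produces $q^{a(r)+N(n-m)} \sum_{\mu=0}^{m-1} f_{m,\mu} q^{\mu N(n-m)}$. Hence
\[
\frac{R_n}{R_{n-m}} = \sum_{\mu=0}^{m-1} f_{m,\mu}\,q^{\mu N(n-m)} \;+\; q^{a(r)}\sum_{\mu=0}^{m-1} f_{m,\mu}\,q^{(\mu+1) N(n-m)}.
\]

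Plugging this back into the right-hand side gives $\sum_{m,j} e_{m,j}\,q^{jN(n-m)}(-1)^{m+1}(R_n/R_{n-m})\,A'_{n-m}$. I then reindex by setting $\nu = j+\mu$ in the first piece and $\nu = j+\mu+1$ in the second piece, which produces the two double sums in $(\mathrm{rec}'_{N,r-1})$. Every step is reversible, giving the stated if-and-only-if.

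The main obstacle, and the only genuinely delicate point, is verifying that the natural bounds obtained from the reindexing actually match the bounds $\nu\in\{0,\dots,r-1\}$ and $\nu\in\{1,\dots,r\}$ in $(\mathrm{rec}'_{N,r-1})$. Here one uses the structural fact that $\alpha<a(r)$ forces $\alpha$ to be a sum of distinct elements of $\{a(1),\dots,a(r-1)\}$, so $w(\alpha)\le r-1$; consequently $e_{m,j}=0$ whenever $j+m-1>r-1$, i.e.\ whenever $j>r-m$. Combined with the ranges $j\in\{0,\dots,r-1\}$ and $\mu\in\{0,\dots,m-1\}$, this cuts the effective range of $\nu=j+\mu$ down to $\{0,\dots,r-1\}$ and of $\nu=j+\mu+1$ down to $\{1,\dots,r\}$, matching the target exactly. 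The lower bounds $\mu \ge 0$ in the target recurrence are safe to keep since the additional (unrealized) terms are annihilated by the same vanishing of $e_{m,\nu-\mu}$ or $e_{m,\nu-\mu-1}$.
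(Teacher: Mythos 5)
Your proposal is correct and follows essentially the same route as the paper: substitute $a_{n-m}=A'_{n-m}R_{n-m}/R_n$... rather, multiply through by $R_n$, expand $R_n/R_{n-m}$ via the $q$-binomial theorem~\eqref{identity} into $\bigl(1+q^{N(n-m)+a(r)}\bigr)\sum_k f_{m,k}q^{kN(n-m)}$, and reindex with $\nu=j+\mu$ (resp.\ $\nu=j+\mu+1$). The only difference is cosmetic --- you expand all $m$ factors and then split ${m\brack k}_{q^N}$ by Pascal's identity, whereas the paper peels off one factor first and expands the remaining $m-1$; both yield the identical decomposition, and your explicit justification of the $\nu$-ranges via the vanishing of $e_{m,j}$ for $j>r-m$ is a point the paper leaves implicit.
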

\begin{proof}
By definition of $(A'_n)_{n \in \N}$, we have
\begin{align*}
\left(1-q^{nN}\right) A'_n =& \sum_{m=1}^{r} \sum_{j=0}^{r-1} \left(d^{m-1} \sum_{\substack{\alpha < a(r) \\ w(\alpha)=j+m-1}} q^{\alpha} + d^{m} \sum_{\substack{\alpha < a(r) \\ w(\alpha)=j+m}} q^{\alpha} \right) {j+m-1 \brack m-1}_{q^N} q^{jN(n-m)}
\\& \times (-1)^{m+1} \prod_{k=1}^m \left(1 + q^{N(n-k)+a(r)}\right) A'_{n-m}.
\end{align*}
Furthermore
\begin{align*}
\prod_{k=1}^m \left(1 + q^{N(n-k)+a(r)}\right) &= \prod_{k=0}^{m-1} \left(1 + q^{Nk+ N(n-m)+a(r)}\right)
\\&= \left( 1+q^{N(n-m)	+a(r)}\right) \prod_{k=1}^{m-1} \left(1 + q^{Nk+ N(n-m)+a(r)}\right)
\\&= \left( 1+q^{N(n-m)	+a(r)}\right) \sum_{k=0}^{m-1} q^{N \frac{k(k+1)}{2} +kN(n-m) +ka(r)} {m-1 \brack k}_{q^N},
\end{align*}
where the last equality follows from~\eqref{identity}.
Therefore
\begin{align*}
& \left(1-q^{nN}\right) A'_n =
\\ & \sum_{m=1}^{r} \left(\sum_{j=0}^{r-1} \left(d^{m-1} \sum_{\substack{\alpha < a(r) \\ w(\alpha)=j+m-1}} q^{\alpha} + d^{m} \sum_{\substack{\alpha < a(r) \\ w(\alpha)=j+m}} q^{\alpha} \right) {j+m-1 \brack m-1}_{q^N} q^{jN(n-m)} \right.
\\& \left. \vphantom{\sum_{\substack{\alpha < a(r) \\ w(\alpha)=j+m-1}}}  \times  \left( 1+q^{N(n-m)	+a(r)}\right) \sum_{k=0}^{m-1} q^{N \frac{k(k+1)}{2} +kN(n-m) +ka(r)} {m-1 \brack k}_{q^N} \right) (-1)^{m+1} A'_{n-m}
\end{align*}
\begin{align*}
= \sum_{m=1}^{r} &\left[ \sum_{j=0}^{r-1} \left(d^{m-1} \sum_{\substack{\alpha < a(r) \\ w(\alpha)=j+m-1}} q^{\alpha} + d^{m} \sum_{\substack{\alpha < a(r) \\ w(\alpha)=j+m}} q^{\alpha} \right) {j+m-1 \brack m-1}_{q^N} q^{jN(n-m)} \right.
\\& \; \; \; \; \; \; \; \; \; \; \times \sum_{k=0}^{m-1} q^{N \frac{k(k+1)}{2} +kN(n-m) +ka(r)} {m-1 \brack k}_{q^N}
\\&+ \sum_{j=0}^{r-1} \left(d^{m-1} \sum_{\substack{\alpha < a(r) \\ w(\alpha)=j+m-1}} q^{\alpha} + d^{m} \sum_{\substack{\alpha < a(r) \\ w(\alpha)=j+m}} q^{\alpha} \right) q^{a(r)} {j+m-1 \brack m-1}_{q^N} q^{(j+1)N(n-m)}
\\& \; \; \; \; \; \; \; \; \; \;  \times \left. \vphantom{\sum_{\substack{\alpha < a(r) \\ w(\alpha)=j+m-1}}} \sum_{k=0}^{m-1} q^{N \frac{k(k+1)}{2} +kN(n-m) +ka(r)} {m-1 \brack k}_{q^N} \right] (-1)^{m+1} A'_{n-m}.
\end{align*}
Thus
\begin{align*}
\left(1-q^{nN}\right) A'_n =\sum_{m=1}^{r} &\left(\sum_{j=0}^{r-1} e_{m,j} q^{jN(n-m)} \sum_{k=0}^{m-1} f_{m,k} q^{kN(n-m)} \right.
\\&+ \left. q^{a(r)} \sum_{j=1}^{r} e_{m,j-1} q^{jN(n-m)} \sum_{k=0}^{m-1} f_{m,k} q^{kN(n-m)} \right) (-1)^{m+1} A'_{n-m}.
\end{align*}
Rearranging leads to~\eqref{recA'}. As always, $A'_0 = a_0=1$. The lemma is proved.
\end{proof}

We now want to show that $(A_n)_n \in \N$ and $(A'_n)_n \in \N$ are in fact equal.

\begin{lemma}
\label{equalAA'}
Let $(A_n)_{n \in \N}$ and $(A'_n)_{n \in \N}$ be defined as in Lemmas~\ref{lemmaA} and~\ref{lemmaA'}.
Then for every $n \in \N$, $A_n = A'_n$.
\end{lemma}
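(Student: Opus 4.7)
The plan is to prove Lemma~\ref{equalAA'} by strong induction on $n$. For $n=0$, Lemmas~\ref{lemmaA} and~\ref{lemmaA'} directly give $A_0 = A'_0 = 1$. For the inductive step, assume $A_i = A'_i$ for every $i < n$. Each of the recurrences $(\mathrm{rec}_{N,r})$ and $(\mathrm{rec}'_{N,r-1})$ expresses $(1-q^{nN})$ times the $n$-th term as a linear combination $\sum_{m=1}^{r}\kappa_{m}(n)(-1)^{m+1}(\cdot)_{n-m}$ with scalar coefficients $\kappa_{m}(n)$ that depend on $q,d,N,a(r)$ and $n-m$ but not on the sequences themselves. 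Hence, to conclude $A_n = A'_n$ it suffices to prove that the coefficient of $(-1)^{m+1}A_{n-m}$ in $(\mathrm{rec}_{N,r})$ equals the coefficient of $(-1)^{m+1}A'_{n-m}$ in $(\mathrm{rec}'_{N,r-1})$ for each $1 \leq m \leq r$.

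Fix $m$; each coefficient is a polynomial in $Q := q^{N(n-m)}$, so I would compare the two power by power in $Q$. The essential combinatorial input is the splitting
\begin{equation*}
\sum_{\substack{\alpha < a(r+1) \\ w(\alpha)=\nu}} q^{\alpha} \;=\; \sum_{\substack{\alpha < a(r) \\ w(\alpha)=\nu}} q^{\alpha} \;+\; q^{a(r)} \sum_{\substack{\alpha' < a(r) \\ w(\alpha')=\nu-1}} q^{\alpha'},
\end{equation*}
valid for every $\nu$: any $\alpha \in A'$ with $a(r) \leq \alpha < a(r+1)$ must contain $a(r)$ as a summand, since by hypothesis $a(1)+\cdots+a(r-1) < a(r)$. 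Substituting this into each factor $b_{m-k,j}$ rewrites every summand $c_{k,j}\,b_{m-k,j}$ appearing on the $A$-side as $c_{k,j}(e_{m-k,j} + q^{a(r)}\,e_{m-k,j-1})$, which already matches the structure of the $A'$-side of $(\mathrm{rec}'_{N,r-1})$.

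What remains is a purely algebraic $q$-binomial identity: after collecting the coefficient of each monomial of the form $d^{m-1}\sum q^{\alpha}$ or $d^{m}\sum q^{\alpha}$, the equality of the two coefficients reduces to a relation among sums of products of $q^{N}$-binomial coefficients weighted by $q^{Nk(k+1)/2+ka(r)}$. A small sanity check (e.g.\ $j=m=2$) already shows that this identity is \emph{not} a term-by-term match: the $q^{a(r)}$-free and $q^{a(r)}$-linear contributions redistribute between the two sides, so the matching has to be carried out by grouping terms appropriately and applying the two $q$-Pascal identities of Proposition~\ref{pascal} iteratively (or equivalently invoking a $q$-Vandermonde identity). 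The hard part will be precisely this bookkeeping: the summation ranges involve $\min$-bounds, the weights $q^{Nk(k+1)/2+ka(r)}$ that are shared by $c_{k,j}$ and $f_{m,k}$ have to be tracked carefully, and the $d^{m-1}$ and $d^{m}$ pieces of each $e$-term must be separated while simultaneously handling the interaction of the two $q^{a(r)}$-contributions on each side.
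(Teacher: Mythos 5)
Your reduction is the same as the paper's: induct on $n$ using $A_0=A'_0=1$, reduce to matching the coefficients of $(-1)^{m+1}A_{n-m}$ in the two recurrences, compare them power by power in $q^{N(n-m)}$ (i.e.\ show $T_{m,j}=T'_{m,j}$ in the paper's notation), and split the sums over $\alpha<a(r+1)$ according to whether $a(r)$ occurs as a summand. But the argument stops exactly where the lemma's actual content begins, and it contains one false intermediate claim: the splitting does \emph{not} rewrite $c_{k,j}b_{m-k,j}$ as $c_{k,j}\bigl(e_{m-k,j}+q^{a(r)}e_{m-k,j-1}\bigr)$. The $q^{a(r)}$-part of $b_{m-k,j}$ still carries the binomial ${j+m-k-1 \brack m-k-1}_{q^N}$, whereas $e_{m-k,j-1}$ carries ${j+m-k-2 \brack m-k-1}_{q^N}$; moreover the $e$'s actually occurring in $(\mathrm{rec}'_{N,r-1})$ are $e_{m,\nu-\mu}$ (first index $m$, binomial ${\,\cdot\, \brack m-1}_{q^N}$), paired with $f_{m,\mu}$ rather than with anything resembling $c_{k,j}$, so nothing ``already matches the structure of the $A'$-side.''

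The remaining identity is the whole substance of the lemma and is not proved. In the paper it requires (i) the product identity ${m-1\brack k}_{q^N}{j+m-k-1\brack m-1}_{q^N}={j\brack k}_{q^N}{j+m-k-1\brack m-k-1}_{q^N}$ to convert the $f_{m,k}$-side binomials into the $c_{k,j}$-side ones, (ii) both $q$-Pascal identities of Proposition~\ref{pascal} applied to ${j\brack k}_{q^N}$ and ${j+m-k-2\brack m-k-1}_{q^N}$, after which two of the four resulting sums reproduce $T_{m,j}$ via~\eqref{eqcb}, and (iii) a shift $k\mapsto k+1$ showing the other two sums telescope against the boundary term $\chi(j\le m-1)\,q^{Nj(j+1)/2+ja(r)}(\cdots){m-1\brack m-j-1}_{q^N}$ arising from $\mu=\nu$ in $S'_m$ --- a term your proposal never accounts for. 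Writing that ``the matching has to be carried out by grouping terms appropriately'' and that ``the hard part will be precisely this bookkeeping'' defers the entire proof; as written you have established only the (correct) framework, not the lemma.
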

\begin{proof}
To prove the equality, it is sufficient to show that for every $1 \leq m \leq r,$ the coefficient of $(-1)^{m+1} A_{n-m}$ in~\eqref{recA} is the same as the coefficient of $(-1)^{m+1} A'_{n-m}$ in~\eqref{recA'}.
Let $m \in \lbrace 1,...,r \rbrace$ and
\begin{align*}
S_{m} &: = \left[ (-1)^{m+1} A_{n-m}\right] (\mathrm{rec}_{N,r}) 
\\&= d^{m-1} \sum_{\substack{\alpha < a(r) \\ w(\alpha)=m-1}} q^{\alpha} + d^{m} \sum_{\substack{\alpha < a(r) \\ w(\alpha)=m}} q^{\alpha} + \sum_{j=1}^r \sum_{k=0}^{\min(j-1,m-1)} c_{k,j} b_{m-k,j} q^{jN(n-m)}
\end{align*}
and
\begin{align*}
S'_{m} &: = \left[ (-1)^{m+1} A'_{n-m}\right] (\mathrm{rec'}_{N,r-1}) 
\\&= \sum_{\nu=0}^{r-1} \sum_{\mu=0}^{\min(m-1, \nu)} f_{m,\mu} e_{m,\nu - \mu} q^{\nu N (n-m)} + q^{a(r)} \sum_{\nu=1}^{r} \sum_{\mu=0}^{\min(m-1, \nu-1)} f_{m,\mu} e_{m,\nu - \mu -1} q^{\nu N (n-m)}
\\&= f_{m,0} e_{m,0} + \sum_{\nu=1}^{r} \left( \sum_{\mu=0}^{\min(m-1, \nu)} f_{m,\mu} e_{m,\nu - \mu} + q^{a(r)} \sum_{\mu=0}^{\min(m-1, \nu-1)} f_{m,\mu} e_{m,\nu - \mu -1} \right) q^{\nu N (n-m)},
\end{align*}
because $e_{m, r-\mu}=0$ for all $\mu$, as $\mu \leq m-1$ so the sums are over $\alpha$ such that $\alpha < a(r)$ and $w(\alpha) \geq r$, which is impossible.

Let us first notice that $$f_{m,0} e_{m,0} = d^{m-1} \sum_{\substack{\alpha < a(r) \\ w(\alpha)=m-1}} q^{\alpha} + d^{m} \sum_{\substack{\alpha < a(r) \\ w(\alpha)=m}} q^{\alpha}.$$
Now define
$$T_{m,j}:= \sum_{k=0}^{\min(j-1,m-1)} c_{k,j} b_{m-k,j},$$
and
$$T'_{m,j}:= \sum_{k=0}^{\min(m-1, j)} f_{m,k} e_{m,j-k} + q^{a(r)} \sum_{k=0}^{\min(m-1,j-1)} f_{m,k} e_{m,j-k-1}.$$
The only thing left to do is to show that for every $1 \leq j \leq r$, $$T_{m,j}= T'_{m,j}.$$
We have
\begin{equation}
\label{eqcb}
\begin{aligned}
&c_{k,j} b_{m-k,j} 
\\=& q^{N \frac{k(k+1)}{2}+k a(r)} {j-1 \brack k}_{q^N}  \left( d^{m-1} \sum_{\substack{ \alpha < a(r+1) \\ w(\alpha)=j+m-k-1}} q^{\alpha} +d^m \sum_{\substack{ \alpha < a(r+1) \\ w(\alpha)=j+m-k}} q^{\alpha} \right) {j+m-k-1 \brack m-k-1}_{q^N}
\\=& \left( d^{m-1} \sum_{\substack{ \alpha < a(r) \\ w(\alpha)=j+m-k-1}} q^{\alpha} +d^m \sum_{\substack{ \alpha < a(r) \\ w(\alpha)=j+m-k}} q^{\alpha} \right) q^{N \frac{k(k+1)}{2}+k a(r)} {j-1 \brack k}_{q^N}  {j+m-k-1 \brack m-k-1}_{q^N}
\\&+q^{a(r)} \left( d^{m-1} \sum_{\substack{ \alpha < a(r) \\ w(\alpha)=j+m-k-2}} q^{\alpha} +d^m \sum_{\substack{ \alpha < a(r) \\ w(\alpha)=j+m-k-1}} q^{\alpha} \right) q^{N \frac{k(k+1)}{2}+k a(r)} 
\\&\times {j-1 \brack k}_{q^N}  {j+m-k-1 \brack m-k-1}_{q^N},
\end{aligned}
\end{equation}
in which the last equality follows from separating the sums over $\alpha$ according to whether $\alpha$ contains $a(r)$ as a summand or not.

We also have
\begin{equation}
\label{eqfe}
\begin{aligned}
f_{m,k} e_{m,j-k} &= q^{N \frac{k(k+1)}{2}+k a(r)} {m-1 \brack k}_{q^N} 
\\&\times \left( d^{m-1} \sum_{\substack{ \alpha < a(r) \\ w(\alpha)=j+m-k-1}} q^{\alpha} +d^m \sum_{\substack{ \alpha < a(r) \\ w(\alpha)=j+m-k}} q^{\alpha} \right) {j+m-k-1 \brack m-1}_{q^N},
\end{aligned}
\end{equation}
and
\begin{equation}
\label{eqfe'}
\begin{aligned}
q^{a(r)} f_{m,k} e_{m,j-k-1} &= q^{N \frac{k(k+1)}{2}+(k+1) a(r)} {m-1 \brack k}_{q^N}  
\\& \times \left( d^{m-1} \sum_{\substack{ \alpha < a(r) \\ w(\alpha)=j+m-k-2}} q^{\alpha} +d^m \sum_{\substack{ \alpha < a(r) \\ w(\alpha)=j+m-k-1}} q^{\alpha} \right) {j+m-k-2 \brack m-1}_{q^N}.
\end{aligned}
\end{equation}

By a simple calculation using the definition of $q$-binomial coefficients, we get the following result
For all $j,k,m \in \N$,
\begin{equation}
\label{equalityqbin}
{m-1 \brack k}_{q^N} {j+m-k-1 \brack m-1}_{q^N} = {j \brack k}_{q^N} {j+m-k-1 \brack m-k-1}_{q^N}.
\end{equation}
Using~\eqref{equalityqbin}, we obtain
\begin{align*}
T'_{m,j}&= \chi( j \leq m-1) \ q^{N \frac{j(j+1)}{2}+j a(r)} \left( d^{m-1} \sum_{\substack{ \alpha < a(r) \\ w(\alpha)=m-1}} q^{\alpha} +d^m \sum_{\substack{ \alpha < a(r) \\ w(\alpha)=m}} q^{\alpha} \right) {m-1 \brack m-j-1}_{q^N}
\\&+ \sum_{k=0}^{\min(m-1,j-1)} q^{N \frac{k(k+1)}{2}+k a(r)}
\\& \; \; \; \; \;\; \; \; \; \; \; \; \; \; \;\times \left( d^{m-1} \sum_{\substack{ \alpha < a(r) \\ w(\alpha)=j+m-k-1}} q^{\alpha} +d^m \sum_{\substack{ \alpha < a(r) \\ w(\alpha)=j+m-k}} q^{\alpha} \right) {j \brack k}_{q^N} {j+m-k-1 \brack m-k-1}_{q^N}
\\&+ \sum_{k=0}^{\min(m-1,j-1)} q^{N \frac{k(k+1)}{2}+(k+1) a(r)}
\\& \; \; \; \; \;\; \; \; \; \; \; \; \; \; \;\times \left( d^{m-1} \sum_{\substack{ \alpha < a(r) \\ w(\alpha)=j+m-k-2}} q^{\alpha} +d^m \sum_{\substack{ \alpha < a(r) \\ w(\alpha)=j+m-k-1}} q^{\alpha} \right) {j-1 \brack k}_{q^N} {j+m-k-2 \brack m-k-1}_{q^N}.
\end{align*}
By~\eqref{pascal2} of Lemma~\ref{pascal}, we have
$${j \brack k}_{q^N} = {j-1 \brack k}_{q^N} + q^{N(j-k)} {j-1 \brack k-1}_{q^N},$$
$${j+m-k-2 \brack m-k-1}_{q^N} = {j+m-k-1 \brack m-k-1}_{q^N} - q^{Nj} {j+m-k-2 \brack m-k-2}_{q^N}.$$
This allows us to rewrite $T'_{m,j}$ as
\begin{align*}
T'_{m,j}&= \chi( j \leq m-1) \ q^{N \frac{j(j+1)}{2}+j a(r)} \left( d^{m-1} \sum_{\substack{ \alpha < a(r) \\ w(\alpha)=m-1}} q^{\alpha} +d^m \sum_{\substack{ \alpha < a(r) \\ w(\alpha)=m}} q^{\alpha} \right) {m-1 \brack m-j-1}_{q^N}
\\&+ \sum_{k=0}^{\min(m-1,j-1)} q^{N \frac{k(k+1)}{2}+k a(r)}
\\& \; \; \; \; \;\; \; \; \; \; \; \; \; \; \;\times \left( d^{m-1} \sum_{\substack{ \alpha < a(r) \\ w(\alpha)=j+m-k-1}} q^{\alpha} +d^m \sum_{\substack{ \alpha < a(r) \\ w(\alpha)=j+m-k}} q^{\alpha} \right) {j-1 \brack k}_{q^N} {j+m-k-1 \brack m-k-1}_{q^N}
\\&+ \sum_{k=0}^{\min(m-1,j-1)} q^{N \frac{k(k+1)}{2}+k a(r)+ N(j-k)}
\\& \; \; \; \; \;\; \; \; \; \; \; \; \; \; \;\times \left( d^{m-1} \sum_{\substack{ \alpha < a(r) \\ w(\alpha)=j+m-k-1}} q^{\alpha} +d^m \sum_{\substack{ \alpha < a(r) \\ w(\alpha)=j+m-k}} q^{\alpha} \right) {j-1 \brack k-1}_{q^N} {j+m-k-1 \brack m-k-1}_{q^N}
\\&+ \sum_{k=0}^{\min(m-1,j-1)} q^{N \frac{k(k+1)}{2}+(k+1) a(r)}
\\& \; \; \; \; \;\; \; \; \; \; \; \; \; \; \;\times \left( d^{m-1} \sum_{\substack{ \alpha < a(r) \\ w(\alpha)=j+m-k-2}} q^{\alpha} +d^m \sum_{\substack{ \alpha < a(r) \\ w(\alpha)=j+m-k-1}} q^{\alpha} \right) {j-1 \brack k}_{q^N} {j+m-k-1 \brack m-k-1}_{q^N}
\\&- \sum_{k=0}^{\min(m-2,j-1)} q^{N \frac{k(k+1)}{2}+(k+1) a(r)+Nj}
\\& \; \; \; \; \;\; \; \; \; \; \; \; \; \; \;\times \left( d^{m-1} \sum_{\substack{ \alpha < a(r) \\ w(\alpha)=j+m-k-2}} q^{\alpha} +d^m \sum_{\substack{ \alpha < a(r) \\ w(\alpha)=j+m-k-1}} q^{\alpha} \right) {j-1 \brack k}_{q^N} {j+m-k-2 \brack m-k-2}_{q^N}.
\end{align*}
By~\eqref{eqcb}, the sum of the second and fourth term in the sum above is exactly equal to $T{m,j}$.
Let $X$ denote the sum of the third and fifth term. We now want to show that $$X + \chi( j \leq m-1) \ q^{N \frac{j(j+1)}{2}+j a(r)} \left( d^{m-1} \sum_{\substack{ \alpha < a(r) \\ w(\alpha)=m-1}} q^{\alpha} +d^m \sum_{\substack{ \alpha < a(r) \\ w(\alpha)=m}} q^{\alpha} \right) {m-1 \brack m-j-1}_{q^N}=0.$$
By the change of variable $k'=k+1$ in the fourth sum, we get
\begin{align*}
X &= \sum_{k=0}^{\min(m-1,j-1)} q^{N \frac{k(k-1)}{2}+k a(r)+ Nj}
\\& \; \; \; \; \;\; \; \; \; \; \; \; \; \; \;\times \left( d^{m-1} \sum_{\substack{ \alpha < a(r) \\ w(\alpha)=j+m-k-1}} q^{\alpha} +d^m \sum_{\substack{ \alpha < a(r) \\ w(\alpha)=j+m-k}} q^{\alpha} \right) {j-1 \brack k-1}_{q^N} {j+m-k-1 \brack m-k-1}_{q^N}
\\&- \sum_{k=1}^{\min(m-1,j)} q^{N \frac{k(k-1)}{2}+k a(r)+Nj}
\\& \; \; \; \; \;\; \; \; \; \; \; \; \; \; \;\times \left( d^{m-1} \sum_{\substack{ \alpha < a(r) \\ w(\alpha)=j+m-k-1}} q^{\alpha} +d^m \sum_{\substack{ \alpha < a(r) \\ w(\alpha)=j+m-k}} q^{\alpha} \right) {j-1 \brack k-1}_{q^N} {j+m-k-1 \brack m-k-1}_{q^N}
\\&= \begin{cases}
0,\  \text{if}\ j \geq m,\\
-q^{N \frac{j(j+1)}{2}+j a(r)} \left( d^{m-1} \sum_{\substack{ \alpha < a(r) \\ w(\alpha)=m-1}} q^{\alpha} +d^m \sum_{\substack{ \alpha < a(r) \\ w(\alpha)=m}} q^{\alpha} \right) {m-1 \brack m-j-1}_{q^N},\ \text{otherwise}
\end{cases}
\\&= -\chi( j \leq m-1) \ q^{N \frac{j(j+1)}{2}+j a(r)} \left( d^{m-1} \sum_{\substack{ \alpha < a(r) \\ w(\alpha)=m-1}} q^{\alpha} +d^m \sum_{\substack{ \alpha < a(r) \\ w(\alpha)=m}} q^{\alpha} \right) {m-1 \brack m-j-1}_{q^N}.
\end{align*}
This completes the proof.
\end{proof}

We can finally turn to the proof of Theorem~\ref{main}.

\begin{proof}[Proof of Theorem~\ref{main}]
Let us start by the initial case $r=1$. Let $N \geq a(1)$ and $f$ such that
\begin{equation}
\tag{$\mathrm{eq}_{N,1}$}
\left(1-dxq^{a(1)}\right)f(x) = f \left(xq^N\right) +xq^{a(1)}f\left(xq^N\right).
\end{equation}

Then
\begin{equation}
\label{r1}
f(x) = \frac{1+xq^{a(1)}}{1-dxq^{a(1)}} f\left(xq^N\right).
\end{equation}
Iterating~\eqref{r1}, we get
$$f(x)= \prod_{n=0}^{\infty} \frac{1+xq^{Nn+a(1)}}{1-dxq^{Nn+a(1)}} f(0).$$
Thus
$$f(1)=\frac{(-q^{a(1)};q^N)_{\infty}}{(dq^{a(1)};q^N)_{\infty}}.$$

Now assume that Theorem~\ref{main} is true for some $r-1 \geq 1$. We want to show that it is true for $r$ too.
Let $N \geq \alpha \left( 2^r -1 \right)$ and $f$ be a function with $f(0)=1$ satisfying~\eqref{qdiff}.
Let
$$F(x):= f(x) \prod_{n=0}^{\infty} \frac{1-dxq^{Nn+a(r)}}{1-xq^{Nn}}.$$
By Lemma~\ref{lemmaF}, $F(0)=1$ and $F$ satisfies~\eqref{qdiffF}.
Now let $$F(x) =: \sum_{n=0}^{\infty} A_n x^n.$$
Then by Lemma~\ref{lemmaA} $A_0=1$ and $(A_n)_{n \in \N}$ satisfies~\eqref{recA}.
But by Lemma~\ref{equalAA'}, $(A_n)_{n \in \N}$ also satisfies~\eqref{recA'}.
Now let  $$ A_n =: a_n \prod_{k=0}^{n-1} \left( 1 +q^{Nk+a(r)} \right).$$
By Lemma~\ref{lemmaA'}, $a_0 =1$ and $(a_n)_{n \in \N}$ satisfies~\eqref{reca}.
Let $$G(x) := \sum_{n=0}^{\infty} a_n x^n.$$
By Lemma~\ref{lemmaa}, $G(0)=1$ and $G$ satisfies~\eqref{qdiffG}.
Finally let $$g(x)=: G(x) \prod_{n=0}^{\infty} \left(1-xq^{Nn}\right).$$
By Lemma~\ref{lemmaG}, $g(0)=1$ and $g$ satisfies $(\mathrm{eq}_{N,r-1})$.
Now $N$ is still larger than $\alpha \left( 2^{r-1}-1 \right)$ and we can use the induction hypothesis which gives
\begin{equation}
\label{g1}
g(1)= \prod_{k=1}^{r-1} \frac{(-q^{a(k)};q^N)_{\infty}}{(dq^{a(k)};q^N)_{\infty}}.
\end{equation}
By Appell's comparison theorem~\cite{Appell},
\begin{align*}
\lim_{x \rightarrow 1^-} (1-x) \sum_{n=0}^{\infty} a_n x^n &= \lim_{n \rightarrow \infty} a_n
\\&= \lim_{x \rightarrow 1^-} (1-x) G(x) 
\\&= \lim_{x \rightarrow 1^-} (1-x) \frac{g(x)}{\prod_{n=0}^{\infty} \left(1-xq^{Nn}\right)}
\\&= \frac{g(1)}{\prod_{n=1}^{\infty}\left(1-q^{nN}\right)}.
\end{align*}
Thus
$$\lim_{n \rightarrow \infty} A_n = \prod_{k=0}^{\infty} \left( 1 +q^{Nk+a(r)} \right) \frac{g(1)}{\prod_{n=1}^{\infty}\left(1-q^{nN}\right)}.$$
Therefore, by Appell's lemma again,
\begin{equation}
\label{limitF}
\begin{aligned}
\lim_{x \rightarrow 1^-} (1-x) F(x) &= \lim_{n \rightarrow \infty} A_n
\\&= \prod_{k=0}^{\infty} \left( 1 +q^{Nk+a(r)} \right) \frac{g(1)}{\prod_{n=1}^{\infty}\left(1-q^{nN}\right)}.
\end{aligned}
\end{equation}
Finally,
\begin{align*}
f(1) &= \lim_{x \rightarrow 1^-} f(x)
\\&= \lim_{x \rightarrow 1^-} \prod_{n=0}^{\infty} \frac{1-xq^{Nn}}{1-dxq^{Nn+a(r)}} F(x)
\\&= \frac{\prod_{n=1}^{\infty} \left(1-q^{Nn}\right)}{ \prod_{n=0}^{\infty} \left(1-dq^{Nn+a(r)}\right)} \prod_{k=0}^{\infty} \left( 1 +q^{Nk+a(r)} \right) \frac{g(1)}{\prod_{n=1}^{\infty}\left(1-q^{nN}\right)} \ \text{by~\eqref{limitF}}
\\&= \frac{\left(-q^{a(r)};q^N\right)_{\infty}}{\left(dq^{a(r)};q^N\right)_{\infty}} g(1).
\end{align*}
Then by~\eqref{g1},
$$f(1)= \prod_{k=1}^r \frac{(-q^{a(k)};q^N)_{\infty}}{(dq^{a(k)};q^N)_{\infty}}.$$
This completes the proof.
\end{proof}

Now Theorem~\ref{dousse} is a simple corollary of Theorem~\ref{main}.

\begin{proof}[Proof of Theorem~\ref{dousse}]
By Lemma~\ref{conj}, $f_a(1)$ satisfies~\eqref{qdiff}. Therefore
$$ f_{a(1)}(1) = \prod_{k=1}^r \frac{(-q^{a(k)};q^N)_{\infty}}{(dq^{a(k)};q^N)_{\infty}}.$$
But $f_{a(1)}(1)$ is the generating function for overpartitions counted by $E(A'_N;n,k)$, and $$\prod_{k=1}^r \frac{(-q^{a(k)};q^N)_{\infty}}{(dq^{a(k)};q^N)_{\infty}}$$ is the generating function for overpartitions counted by $D(A_N;n,k)$.
Thus $D(A_N;n,k)= E(A'_N;n,k)$ and the theorem is proved.
\end{proof}

\section{Conclusion}
We generalised Andrews' theorem to overpartitions by using recurrences and $q$-difference equations. In~\cite{Generalisation2}, Andrews proved another generalisation of Schur's theorem similar to Theorem~\ref{andrews}. It is likely that similar methods would also work to generalise this theorem to overpartitions. In~\cite{Corteel}, Corteel and Lovejoy proved an even more general theorem of which both of Andrews' theorems are particular cases. It would be interesting to generalise it to overpartitions too, but new techniques might be necessary.

\section*{Acknowledgements}
The author thanks Jeremy Lovejoy for extremely carefully reading the preliminary version of this paper and giving her helpful advice to improve it.

\bibliographystyle{spmpsci}      % mathematics and physical sciences

%\bibliography{references}   % name your BibTeX data base

\end{document}